\renewcommand{\Re}{\operatorname{Re}}
\renewcommand{\Im}{\operatorname{Im}}
\newcommand{\eq}{:=}
\newcommand{\grad}{\boldsymbol \nabla}
\renewcommand{\div}{\grad \cdot}
\newcommand{\ddiv}{\operatorname{div}}
\newcommand{\BH}{\boldsymbol H}
\newcommand{\BL}{\boldsymbol L}
\newcommand{\ba}{\boldsymbol a}
\newcommand{\bb}{\boldsymbol b}
\newcommand{\bc}{\boldsymbol c}
\newcommand{\be}{\boldsymbol e}
\newcommand{\bn}{\boldsymbol n}
\newcommand{\bo}{\boldsymbol o}
\newcommand{\bv}{\boldsymbol v}
\newcommand{\bx}{\boldsymbol x}
\newcommand{\bz}{\boldsymbol z}
\newcommand{\CF}{\mathcal F}
\newcommand{\CP}{\mathcal P}
\newcommand{\CQ}{\mathcal Q}
\newcommand{\CT}{\mathcal T}
\newcommand{\CV}{\mathcal V}
\newcommand{\LF}{\mathscr F}
\newcommand{\LK}{\mathscr K}
\newcommand{\LP}{\mathscr P}
\newcommand{\LR}{\mathscr R}
\newcommand{\BCP}{\boldsymbol{\CP}}
\newcommand{\BLF}{\pmb{\LF}}
\newcommand{\BLR}{\pmb{\LR}}
\newcommand{\R}{\mathbb R}
\newcommand{\C}{\mathbb C}
\newcommand{\RT}{\boldsymbol{RT}}
\newcommand{\enorm}[1]{|\!|\!|#1|\!|\!|}
\newcommand{\shapereg}{\kappa}
\newcommand{\vel}{\vartheta}
\newcommand{\hmax}{\mathfrak h}
\newcommand{\vmin}{\mathfrak v}
\newcommand{\AAA}{\underline{\mathfrak A}}
\newcommand{\GD}{\Gamma_{\rm D}}
\newcommand{\GN}{\Gamma_{\rm N}}
\newcommand{\sig}{\boldsymbol \sigma}
\newcommand{\MZ}{\underline{\boldsymbol Z}}
\newcommand{\MA}{\underline{\boldsymbol A}}
\newcommand{\MI}{\underline{\boldsymbol I}}
\newcommand{\MW}{\underline{\boldsymbol W}}
\newcommand{\maxest}{\rho_h}
\newcommand{\maxerr}{\varepsilon_h}
\newcommand{\maxcontsol}{\Theta}
\newcommand{\maxsol}{\maxcontsol_h}
\newcommand{\BLFa}{\BLF_h^{\ba}}
\newcommand{\CTa}{\CT_h^{\ba}}
\newcommand{\oma}{\omega^{\ba}}
\newcommand{\pa}{\psi^{\ba}}
\newcommand{\Da}{\mathfrak d^{\ba}}
\newcommand{\Ta}{\mathfrak t^{\ba}}
\newtheorem{theorem}{Theorem}
\newtheorem{lemma}[theorem]{Lemma}
\newtheorem{remark}[theorem]{Remark}
\numberwithin{equation}{section}
\numberwithin{theorem}{section}
\numberwithin{figure}{subsection}
\begin{document}

\title[Guaranteed stability bounds]{Guaranteed stability bounds for second-order PDE problems satisfying a G\aa rding inequality}
\author{T. Chaumont-Frelet$^\star$}

\address{\vspace{-.5cm}}
\address{\noindent \tiny \textup{$^\star$Inria, Univ. Lille, CNRS, UMR 8524 -- Laboratoire Paul Painlev\'e}}

\begin{abstract}
We propose an algorithm to numerically determine whether a second-order linear PDE problem
satisfying a G\aa rding inequality is well-posed. This algorithm further provides a
lower bound to the inf-sup constant of the weak formulation, which may in turn be used
for a posteriori error estimation purposes. Our numerical lower bound is based on
two discrete singular value problems involving a Lagrange finite element discretization
coupled with an a posteriori error estimator based on flux reconstruction techniques.
We show that if the finite element discretization is sufficiently rich, our lower bound
underestimates the optimal inf-sup constant only by a factor roughly equal to two at most.
\end{abstract}

\maketitle

\section{Introduction}

Linear boundary value problems with indefinite weak formulations arise in
many important applications including convection-dominated diffusion
and time-harmonic wave propagation problems. In such cases, it is not
always known whether the problem is well-posed. Besides, even in cases
where well-posedness is guaranteed, the magnitude of the stability constant
controlling the norm of the solution in terms of the norm of the right-hand side
is often unknown. In this work, we provide a numerical algorithm that can certify
that the boundary value problem under consideration is well-posed, and
provide a guaranteed upper bound on its stability constant.

We focus on second-order PDE problems of the form: Given $f: \Omega \to \C$,
find $u: \Omega \to \C$ such that
\begin{equation}
\label{eq_bvp_strong}
\left 
\{
\begin{array}{rcll}
-k^2 d u + ik \bc \cdot \grad u-\div (ik\bb u+\MA\grad u) &=& f & \text{ in } \Omega,
\\
u &=& 0 & \text{ on } \GD,
\\
(ik\bb u+\MA\grad u) \cdot \bn &=& 0 & \text{ on } \GN,
\end{array}
\right .
\end{equation}
where $\MA,\bb,\bc$ and $d$ are piecewise constant complex-valued coefficients,
and $\Omega \subset \R^n$ is a bounded domain with $n=2$ or $3$.
The real number $k > 0$ and the complex unit $i$ are conventionally introduced
to make the PDE coefficients physically dimensionless, whereby the dimension of $k$
is the reciprocal of a length. This convention is especially natural for time-harmonic
wave propagation problems where $k$ is the wavenumber, and the coefficients describe
the material properties of the propagation medium. For convection-dominated diffusion
problems only involving real-valued coefficients, the proposed algorithm may be run
employing only real (floating point) numbers.

We demand that the weak formulation of~\eqref{eq_bvp_strong} satisfies a G\aa rding
inequality as stated precisely in~\eqref{eq_gaarding} below. This is for instance always
true if the matrix-coefficient $\MA$ satisfies the positivity property
\begin{equation*}
\Re \MA(\bx) \be \cdot \overline{\be}
\geq
\alpha_\star
>
0
\end{equation*}
for a.e. $\bx$ in $\Omega$ and all unit vectors $\be \in \C^n$. Under this assumption,
we propose an algorithm that provides a guaranteed lower bound $\gamma_h$ to the
inf-sup constant of the sesquilinear form~$\beta(\cdot,\cdot)$ associated
with~\eqref{eq_bvp_strong}. If~\eqref{eq_bvp_strong} is well-posed, we show that
$\gamma_h > 0$ whenever the finite element space employed in the algorithm is
sufficiently rich. This numerically guarantees the well-posedness of~\eqref{eq_bvp_strong},
and leads to upper bounds for the norm of the operator mapping $f$ to $u$ in natural norms.

For simplicity, we assume that the coefficients are piecewise constant onto a
polytopal partition and that the domain and the boundary partition are polytopal.
However, we do not make any regularity assumptions, meaning that the geometry
described by the domain and coefficients can include sharp edges and corners.

The algorithm is based on two discrete singular value problems arising from
a finite element discretization. More specifically, a Lagrange finite element
discretization of~\eqref{eq_bvp_strong} is combined with an a posteriori error
estimator based on a flux reconstruction technique~\cite{braess_schoberl_2008a,%
destuynder_metivet_1999a,ern_vohralik_2015a}. If the problem under consideration is
well-posed, it is guaranteed that the algorithm provides an upper bound for the stability
constant, provided that the finite element space is sufficiently rich.
In fact, we show that as soon as the finite element space provides
reasonable approximate solutions to~\eqref{eq_bvp_strong}, the overestimation on
the stability constant does not exceed roughly a factor two. Furthermore,
the overestimation is independent of the polynomial degree of the finite
element space. This is key for time-harmonic wave propagation problems,
where high-order discretizations are often drastically more performant~\cite{%
ainsworth_2004a,chaumontfrelet_nicaise_2018a,chaumontfrelet_nicaise_2020a,melenk_sauter_2011a}.

Besides their independent interest, guaranteed estimations of the
inf-sup constant are crucial in error certification, as they enter a posteriori
error estimates~\cite{chaumontfrelet_ern_vohralik_2021a,dorfler_sauter_2013a,sauter_zech_2015a}.
As a result, the present result may be combined with existing error estimators
to provide fully-guaranteed error bounds when~\eqref{eq_bvp_strong} is discretized
by finite elements.

The problem under consideration here has already been tackled in the literature
with related ideas, see~\cite{watanabe_kinoshita_nakao_2023a}
and the references therein. However, to the best of the author's knowledge, these works
all require explicit regularity shifts for the principle part of the PDE operator.
In practice, this restricts the setting to convex domains with $\MA = \MI$,
or to domains with smooth boundaries~\cite[Section 6.2.7]{nakao_plum_watanabe_2019a}.
Furthermore, the bounds obtained are not necessarily efficient, especially
for high-order finite element discretizations. In contrast, we employ here
a polynomial-degree-robust a posteriori error estimator which allows us to
work in a general setting where regularity shifts are not available or not
explicit, and to fully exploit the power of high-order finite elements.

Another recent work similar to the present one is~\cite{liu_oishi_2013a}, 
where a discrete eigenvalue problem involving an a posteriori error estimator
based on flux reconstruction techniques is employed. However,~\cite{liu_oishi_2013a}
only focuses on self-adjoint problems, and does not show that the proposed lower
bound is efficient. Besides, poynomial-degree-robustness properties have not
been analyzed in~\cite{liu_oishi_2013a}.

We finally mention that for self-adjoint problems, cheaper algorithms
based on non-conforming or mixed finite element discretizations are
available, see e.g.~\cite{carstensen_gedicke_2014a,gallistl_2023a}.
However, it is not clear that such techniques may be bridged to the
present context.

The remainder of this work is organized as follows.
Section~\ref{section_notation} introduces key notation,
makes the assumptions on~\eqref{eq_bvp_strong}
precise, and collects useful results from the literature.
In Section~\ref{section_lower_bound}, we present our computational
algorithm and establish our guaranteed lower bound.
Section~\ref{section_upper_bound} is dedicated to the
efficiency of the algorithm, whereby we show that our numerical
inf-sup lower bound cannot arbitrarily underestimate the optimal one.
Finally, we present in Section~\ref{section_numerical_examples} two
numerical examples that illustrate the theoretical findings.

\section{Notation, assumptions and tools}
\label{section_notation}

\subsection{Complex numbers}

Classically, we denote by $\R$ and $\C$ the fields of real and complex numbers.
The notation $\R^n$ (resp. $\C^n$) and $\R^{n \times n}$ (resp. $\C^{n \times n}$)
are used for vectors and matrices with real (resp. complex) coefficients. If
$z \in \C$, $z_{\rm r} = \Re z$ and $z_{\rm i} = \Im z$ respectively denote
the real and imaginary parts
of $z$. $z_\dagger$ is its complex conjugate and $|z|$ its modulus. For
a vector $\bz \in \C^d$, $\bz_\dagger$ is its component-wise complex conjugate,
and $|\bz|$ is its $\ell^2(\C^n)$ norm. Finally, if $\MZ \in \C^{n \times n}$
is a matrix, $\MZ_{\rm r}$ and $\MZ_{\rm i}$ are its component-wise real and
imaginary parts. $\MZ_\dagger$ is the adjoint of $\MZ$, i.e., the entries
of $\MZ_\dagger$ are the complex conjugate of the ones of the transpose of $\MZ$.

\subsection{Domain and coefficients}

Throughout this work, $\Omega \subset \R^n$ is a weakly Lipschitz polytopal domain.
The boundary $\partial \Omega$ of $\Omega$ is split into two disjoint relatively
open polytopal subsets $\GD$ and $\GN$ in such way that
$\partial \Omega = \overline{\GD} \cup \overline{\GN}$.

We consider coefficients $\MA: \Omega \to \C^{n \times n}$,
$\bb,\bc: \Omega \to \C^n$ and $d: \Omega \to \C$ that are
piecewise constant on a polytopal partition of $\Omega$.
Specifically, there exists a finite set $\CQ$ of disjoint open
polytopal subsets of $\Omega$ with $\overline{\Omega} = \cup_{Q \in \CQ} \overline{Q}$
such that for all $Q \in \CQ$, there exist constants $\MA_Q \in \C^{n \times n}$,
$\bb_Q,\bc_Q \in \C^n$ and $d_Q \in \C$ such that
\begin{equation}
\label{eq_pw_constant}
\MA(\bx) = \MA_Q,
\quad
\bb(\bx) = \bb_Q,
\quad
\bc(\bx) = \bc_Q,
\quad
d(\bx) = d_Q
\end{equation}
for all $\bx \in Q$.

For simplicity, we assume that the origin belongs to $\Omega$. We then denote by $\ell$
the diameter of $\Omega$ and let $\widehat \Omega \eq (1/\ell) \Omega$.
Throughout the manuscript, $c(\widehat \Omega)$ denotes a constant, that can change from
one occurrence to the other, that only depends on $\widehat \Omega$. Such constant depends
on the ``shape'' of $\Omega$, but not on its size.

\subsection{Function spaces}

For an open set $U \subset \Omega$ with Lipschitz boundary, we denote by $L^2(U)$
the Lebesgue space of (complex-valued) square-integrable functions defined on $U$, and we
let $\BL^2(U) \eq [L^2(U)]^n$. The inner products of both spaces are denoted by
$(\cdot,\cdot)_U$. For measurable weights $w: U \to \R$ and $\MW: U \to \R^{n \times n}$,
we introduce $\|v\|_{w,U} \eq \sqrt{(wv,v)_U}$ and $\|\bv\|_{\MW,U} \eq \sqrt{(\MW\bv,\bv)_U}$
for all $v \in L^2(U)$ and $\bv \in \BL^2(U)$. When $w$ is uniformly away bounded from $0$
and $+\infty$, $\|\cdot\|_{w,U}$ is equivalent to the standard norm on $L^2(U)$. Similarly,
if $\MW$ is symmetric and uniformly bounded from above and below in the sense of quadratic
forms, then $\|\cdot\|_{\MW,U}$ is equivalent to standard norm of $\BL^2(U)$.

The notation $H^1(U)$ is used for the standard Sobolev space of
functions $v \in L^2(U)$ such that $\grad v \in \BL^2(U)$, where
$\grad v$ is the gradient defined in the sense of distributions.
If $\gamma \subset \partial U$ is a relatively open subset of the
boundary, then $H^1_\gamma(U)$ collects functions of $H^1(U) $with
vanishing traces on $\gamma$.

We refer the reader to~\cite{adams_fournier_2003a} for more details on
Lebesgue and Sobolev spaces.

We will finally employ the vector Sobolev space $\BH(\ddiv,U)$ of
vector fields $\bv \in \BL^2(U)$ with weak divergence $\div \bv \in L^2(U)$,
see e.g.~\cite{girault_raviart_1986a}. As above, $\BH_{\gamma}(\ddiv,U)$ is
the subset of $\BH(\ddiv,U)$ consisting of vector fields with vanishing normal
trace on $\gamma$, as per~\cite{fernandes_gilardi_1997a}.

\subsection{Sesquilinear form}

We use the notation $\beta: H^1_{\GD}(\Omega) \times H^1_{\GD}(\Omega) \to \C$
for the sesquilinear form associated with the weak formulation of~\eqref{eq_bvp_strong}.
It is given by
\begin{equation}
\label{eq_definition_beta}
\beta(u,v) \eq (-k^2du+ik\bc \cdot \grad u,v)_\Omega + (ik\bb u + \MA\grad u,\grad v)_\Omega
\end{equation}
for all $u,v \in H^1_{\GD}(\Omega)$. For simplicity, we record here that we
equivalently write that
\begin{equation}
\label{eq_definition_beta_adjoint}
\beta(u,v)
=
(u,-k^2d_\dagger v-ik\bb_\dagger \cdot \grad v)_\Omega
+
(\grad u,\MA_\dagger\grad v-ik\bc_\dagger v)_\Omega.
\end{equation}

For Helmholtz problems without convection, we have $\bb = \bc = \bo$.
In addition, $d$ and $\MA$ are real-valued and positive in the majority
of the domain. These coefficients can have a non-zero imaginary part in
parts of the domain containing absorbing materials, or if a radiation
condition has been approximated by a perfectly matched layer~\cite{berenger_2002a}.

\subsection{G\aa rding inequality}

The key assumption we make throughout this work is that
the sesquilinear form $\beta$ is coercive up to compact
perturbation. Specifically, we assume that there exist
weights
$\mathfrak m, \mathfrak p: \Omega \to \R$ and
$\AAA: \Omega \to \R^{d \times d}$ such that
the G\aa rding inequality
\begin{equation}
\label{eq_gaarding}
\Re \beta(u,u) \geq \enorm{u}_\Omega^2-2k^2\|u\|_{\mathfrak p,\Omega}^2
\end{equation}
holds true with
\begin{equation}
\label{eq_energy_norm}
\enorm{u}_U^2 \eq k^2 \|u\|_{\mathfrak m,U}^2 + \|\grad u\|_{\AAA,U}^2,
\qquad
U \subset \Omega.
\end{equation}
Here, it is assumed that the three weights are piecewise constant on the
partition $\CQ$ as per~\eqref{eq_pw_constant}, that $\mathfrak p \geq 0$,
that $\mathfrak m > 0$ and that $\AAA > 0$ in the sense of quadratic forms.
We also assume for simplicity that $\mathfrak p \not \equiv 0$.

For Helmholtz problems, we can take $\mathfrak p = \mathfrak m = d_{\rm r}$
and $\AAA = \MA_{\rm r}$.

%%%   \begin{remark}
%%%   Taking the real part we have
%%%   \begin{equation*}
%%%   \Re \{-k^2 (du,u)\}
%%%   =
%%%   -k^2 (d_{\rm r} u,u)
%%%   \geq
%%%   -k^2 (d_{\rm r}^+ u,u) = -k^2 \|u\|_{d_{\rm r}^+,\Omega}^2,
%%%   \end{equation*}
%%%   \begin{equation*}
%%%   \Re (\MA\grad u,\grad u) = \|\grad u\|_{\MA_{\rm r},\Omega}^2
%%%   \end{equation*}
%%%   
%%%   \begin{align*}
%%%   \Re ik \{(\bc \cdot \grad u,u)+(\bb u,\grad u)\}
%%%   &=
%%%   \Re ik \{(\bc \cdot \grad u,u)+(\bb u,\grad u)\}
%%%   \\
%%%   &=
%%%   -k \Im \{(\bc \cdot \grad u,u)+(\bb u,\grad u)\}
%%%   \\
%%%   &=
%%%   -k \Im \{-(u,\bc \cdot \grad u)+(\bb u,\grad u)\}
%%%   \\
%%%   &=
%%%   -k \Im \{((\bb-\bc) u,\grad u)\}
%%%   \\
%%%   &=
%%%   -k \Im \{(\MA_{\rm r}\MA_{\rm r}^{-1}(\bb-\bc) u,\grad u)\}
%%%   \\
%%%   &\leq
%%%   \|\MA_{\rm r}^{-1}(\bb-\bc) u\|_{\MA_{\rm r},\Omega}
%%%   \|\grad u\|_{\MA_{\rm r},\Omega}
%%%   \end{align*}
%%%   
%%%   \begin{equation*}
%%%   \|\MA_{\rm r}^{-1}(\bb-\bc) u\|_{\MA_{\rm r},\Omega}^2
%%%   =
%%%   ((\bb-\bc) u \cdot \MA_{\rm r}^{-1} (\bb-\bc)u)
%%%   =
%%%   ((\bb-\bc) \cdot \MA_{\rm r}^{-1} (\bb-\bc),|u|^2)
%%%   \end{equation*}
%%%   \end{remark}

\subsection{Computational mesh}

We consider a mesh $\CT_h$ of the domain $\Omega$ consisting of
(open) simplicial elements $K$. We assume that the mesh is matching,
meaning that the intersection $\overline{K}_+ \cap \overline{K}_-$
of two distinct elements $K_\pm \in \CT_h$ is either empty, or a full
subsimplex (vertex, edge or face) of both elements. We demand that the mesh is conforming,
meaning that the union of the elements cover the domain. We further
require that the coefficients are constant in each element. We also
finally denote by $\CF_h$ the set of mesh faces, and require that every boundary
face either entirely belongs to $\GD$ or to $\GN$.

For an element $K \in \CT_h$, $h_K$ is a diameter of $K$ and $\rho_K$
is the diameter of the largest ball contained in $\overline{K}$. Then,
$\shapereg_K \eq h_K/\rho_K \geq 1$ denote the shape regularity parameter
of $K$, and $\shapereg \eq \max_{K \in \CT_h} \shapereg_K$.

We will often employ the notation $c(\shapereg)$ for a constant,
which may different at each occurrence, only depending only on $\shapereg$.

\subsection{Wavespeed}

For $K \in \CT_h$, we denote by $m_K \eq \mathfrak m|_K$
and $p_K \eq \mathfrak p|_K$ the (constant) restrictions to
$\mathfrak m$ and $\mathfrak p$ to $K$. Similarly,
$\alpha_K^\flat$ and $\alpha_K^\sharp$ denote the smallest
and largest eigenvalues of $\AAA|_K$.
We finally write
\begin{equation}
\label{eq_wavespeed}
\vel_K 
\eq
\sqrt{\frac{p_K}{\alpha_K^\sharp}}
\end{equation}
for the ``wavespeed'' in the element $K$.

\subsection{Polynomial spaces}

If $K \in \CT_h$ is simplex and $r \geq 0$, we denote by
$\CP_r(K)$ the set of (complex-valued) polynomials defined
on $K$ of degree less than or equal to $r$, and we set
$\BCP_r(K) \eq [\CP_r(K)]^d$. We will also need the Raviart--Thomas
polynomial space defined by $\RT_r(K) \eq \BCP_r(K) + \bx \CP_r(K)$,
see~\cite{nedelec_1980a,raviart_thomas_1977a}. If $\CT \subset \CT_h$
is a set of elements, we write $\CP_r(\CT)$, $\BCP_r(\CT)$ and $\RT_r(\CT)$ for functions
whose restriction to each $K \in \CT$ respectively belong to $\CP_r(K)$, $\BCP_r(K)$
and $\RT_r(K)$. Note that these spaces do not embed any compatibility conditions.

\subsection{Finite element spaces}

Throughout, we fix a polynomial degree $p \geq 1$
and consider the Lagrange finite element space
$V_h \eq \CP_p(\CT_h) \cap H^1_{\GD}(\Omega)$.
We will also need an auxiliary space of (discontinuous)
piecewise polynomials. Specificially, we fix
$q \geq 0$ and let $Q_h \eq \CP_q(\CT_h)$.
In practice, we could build $Q_h$ and $V_h$
on different partitions of the mesh, but for
simplicity, we do not. We also note that most
of the proposed analysis is carried out with the
case $q = 0$ in mind, irrespectively of the value
of $p$.

\subsection{Projection}

For $\theta \in L^2(\Omega)$, we denote by $\pi_h \theta \in Q_h$
the orthogonal projection defined by
\begin{equation*}
(\pi_h \theta, r_h)_\Omega = (\theta, r_h)_\Omega
\end{equation*}
for all $r_h \in Q_h$. Classically, this projection is in fact
defined elementwise, and for $K \in \CT_h$,
\begin{equation}
\label{eq_projection_poincare}
\|\theta-\pi_h \theta\|_{K} \leq \frac{h_K}{\pi} \|\grad \theta\|_K,
\end{equation}
whenever $\theta \in H^1(K)$, see e.g.~\cite{bebendorf_2003a}.
Applying~\eqref{eq_projection_poincare} elementwise then gives
\begin{equation}
\label{eq_projection_error}
k\|u-\pi_h u\|_{\mathfrak p,\Omega}
\leq
\frac{k\hmax}{\pi \vmin} \|\grad u\|_{\AAA,\Omega}
\end{equation}
for all $u \in H^1(\Omega)$,
where $\hmax \eq h_{K_\star}$ and $\vmin \eq \vel_{K_\star}$
for (one of) the element(s) $K_\star \in \CT_h$ such that
\begin{equation*}
\frac{h_{K_\star}}{\vel_{K_\star}} = \max_{K \in \CT_h} \frac{h_K}{\vel_K}.
\end{equation*}
Finally, because $\mathfrak m$ is piecewise constant, $\pi_h$ is also an orthogonal
projection in the $\mathfrak m$-weighted $L^2(\Omega)$ inner-product, and we have
\begin{equation}
\label{eq_projection_norm}
\|\pi_h \theta\|_{\mathfrak m,\Omega}
\leq
\|\theta\|_{\mathfrak m,\Omega}.
\end{equation}

\section{Guaranteed inf-sup lower bound}
\label{section_lower_bound}

We are now ready to describe our algorithm. It relies on the fact
that the finite element discretization with the space $V_h$ to~\eqref{eq_bvp_strong}
is well-posed (for sufficiently rich spaces), and is based on two discrete singular
value problems involving the space $Q_h$.

\subsection{Discrete Solution operator}

We assume that for all $\theta_h \in Q_h$, there exists a unique $\LP_h \theta_h \in V_h$
such that
\begin{equation}
\label{eq_definition_LPh}
\beta(w_h,\LP_h(\theta_h)) = k^2 (\mathfrak p w_h,\theta_h)_\Omega
\end{equation}
for all $w_h \in V_h$. We then introduce
\begin{equation*}
\maxsol
\eq
\max_{\substack{\theta_h \in Q_h \\ k\|\theta_h\|_{\mathfrak m} = 1}}
\enorm{\LP_h(\theta_h)}_\Omega.
\end{equation*}
The constant $\maxsol$ can be computed as the solution to matrix singular value
problem. In practice, $\maxsol$ is not exactly computable, but guaranteed
upper bound of arbitrary accuracy may be numerically evaluated,
see~\cite[Chapter 12]{nakao_plum_watanabe_2019a}.
As we will see $\maxsol$ is the key ingredient of our inf-sup lower bound.
Specifically, $1/(1+2\maxsol)$ is a satisfactory bound if the mesh is sufficiently
fine.

\subsection{Error estimator}

We rely on a posteriori error estimation to detect whether the mesh is
sufficiently fine to trust the bound based on $\maxsol$. We will call a
flux reconstruction any linear map $\BLF_h: Q_h \to \BH_{\GN}(\ddiv,\Omega)$
such that
\begin{equation}
\label{eq_equilibration}
\div \BLF_h(\theta_h) = k^2 \mathfrak p \theta_h+k^2d_\dagger \LP_h(\theta_h)+ik\bb_\dagger \cdot \grad \LP_h(\theta_h)
\end{equation}
for all $\theta_h \in Q_h$.
For shortness, we also introduce
\begin{equation*}
\BLR_h(\theta_h) \eq \MA_\dagger \grad \LP_h(\theta_h)-ik\bc_\dagger \LP_h(\theta_h)+\BLF_h(\theta_h),
\end{equation*}
and
\begin{equation}
\label{eq_definition_maxest}
\maxest \eq \max_{\substack{\theta_h \in Q_h \\ k\|\theta_h\|_{\mathfrak p} = 1}}
\|\BLR_h(\theta_h)\|_{\AAA^{-1},\Omega}.
\end{equation}
As for $\maxsol$, the constant $\maxest$ can be computed (or at least, rigorously estimated
from above) via the numerical solution of a discrete singular value problem.

\subsection{Lower bound}

Our numerical algorithm simply amounts to computing $\maxsol$ and $\maxest$.
As we now establish, these two constants may be combined in a simple
algebraic expression to provide a lower bound to the inf-sup constant
of $\beta$. The proposed algorithm works for any choice of flux reconstruction
$\BLF$ satisfying~\eqref{eq_equilibration}. A possible construction will be given
in Section~\ref{section_flux_reconstruction} below.

We start with a Prager--Synge type estimate.
This result is standard, see e.g.~\cite{ern_vohralik_2015a,prager_synge_1947a,vohralik_2010a},
but have not been established for the particular setting considered here, in
particular because the matrix coefficient $\MA$ is complex-valued. We therefore
include a proof for completeness.

\begin{lemma}[Control of the residual]
For all $\theta_h \in Q_h$, the estimate
\begin{equation*}
\max_{\substack{w \in H^1_{\GD}(\Omega) \\ \|\grad w\|_{\AAA,\Omega} = 1}}
|k^2 (\mathfrak p w,\theta_h)_\Omega-\beta(w,\LP_h(\theta_h))|
\leq
\|\BLR(\theta_h)\|_{\AAA^{-1},\Omega}
\end{equation*}
holds true. In particular, we have
\begin{equation}
\label{eq_estimate_rho}
\max_{\substack{\theta_h \in Q_h \\ k\|\theta_h\|_{\mathfrak p,\Omega} = 1}}
\max_{\substack{w \in H^1_{\GD}(\Omega) \\ \|\grad w\|_{\AAA,\Omega} = 1}}
|k^2 (\mathfrak p w,\theta_h)_\Omega-\beta(w,\LP_h(\theta_h))|
\leq
\maxest.
\end{equation}
\end{lemma}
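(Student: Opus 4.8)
The plan is to expand the residual functional $k^2(\mathfrak p w,\theta_h)_\Omega-\beta(w,\LP_h(\theta_h))$ explicitly using the adjoint form~\eqref{eq_definition_beta_adjoint} of $\beta$, and then exploit the equilibration property~\eqref{eq_equilibration} of the flux reconstruction to absorb all the zeroth-order terms into a divergence that can be integrated by parts. First I would write, using \eqref{eq_definition_beta_adjoint} with $u = w$ and $v = \LP_h(\theta_h)$,
\begin{equation*}
\beta(w,\LP_h(\theta_h))
=
(w,-k^2d_\dagger \LP_h(\theta_h)-ik\bb_\dagger \cdot \grad \LP_h(\theta_h))_\Omega
+
(\grad w,\MA_\dagger\grad \LP_h(\theta_h)-ik\bc_\dagger \LP_h(\theta_h))_\Omega,
\end{equation*}
so that
\begin{equation*}
k^2 (\mathfrak p w,\theta_h)_\Omega-\beta(w,\LP_h(\theta_h))
=
(w, k^2\mathfrak p\theta_h+k^2d_\dagger \LP_h(\theta_h)+ik\bb_\dagger \cdot \grad \LP_h(\theta_h))_\Omega
-
(\grad w,\MA_\dagger\grad \LP_h(\theta_h)-ik\bc_\dagger \LP_h(\theta_h))_\Omega.
\end{equation*}
By~\eqref{eq_equilibration} the first bracket equals $\div \BLF_h(\theta_h)$, so the first term is $(w,\div\BLF_h(\theta_h))_\Omega$.

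**Integration by parts and recombination.**
Next I would integrate $(w,\div\BLF_h(\theta_h))_\Omega$ by parts. Since $\BLF_h(\theta_h)\in\BH_{\GN}(\ddiv,\Omega)$ has vanishing normal trace on $\GN$, and $w\in H^1_{\GD}(\Omega)$ vanishes on $\GD$, the boundary term drops and we get $(w,\div\BLF_h(\theta_h))_\Omega = -(\grad w,\BLF_h(\theta_h))_\Omega$ (with the appropriate complex conjugation convention inherited from the $L^2$ inner product; care must be taken that the identity $(w,\div\bv)_\Omega = -(\grad w,\bv)_\Omega$ holds with the sesquilinear convention in use, which it does). Combining, the whole residual collapses to
\begin{equation*}
k^2 (\mathfrak p w,\theta_h)_\Omega-\beta(w,\LP_h(\theta_h))
=
-(\grad w, \MA_\dagger\grad \LP_h(\theta_h)-ik\bc_\dagger \LP_h(\theta_h)+\BLF_h(\theta_h))_\Omega
=
-(\grad w,\BLR_h(\theta_h))_\Omega,
\end{equation*}
recognizing exactly the definition of $\BLR_h(\theta_h)$.

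**Cauchy–Schwarz and conclusion.**
Finally I would apply the weighted Cauchy–Schwarz inequality: writing $\grad w = \mathfrak A^{1/2}\mathfrak A^{-1/2}\grad w$ (legitimate since $\mathfrak A>0$ is symmetric positive-definite and piecewise constant), we have
\begin{equation*}
|(\grad w,\BLR_h(\theta_h))_\Omega|
\leq
\|\grad w\|_{\mathfrak A,\Omega}\,\|\BLR_h(\theta_h)\|_{\mathfrak A^{-1},\Omega}.
\end{equation*}
Taking the maximum over $w\in H^1_{\GD}(\Omega)$ with $\|\grad w\|_{\mathfrak A,\Omega}=1$ yields the first displayed estimate. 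For~\eqref{eq_estimate_rho}, note that if $k\|\theta_h\|_{\mathfrak p,\Omega}=1$ then $\|\theta_h\|_{\mathfrak p,\Omega} = 1/k$, so $\|\BLR_h(\theta_h)\|_{\mathfrak A^{-1},\Omega} = (1/k)\,\|\BLR_h(\theta_h/\|\theta_h\|_{\mathfrak p,\Omega})\|_{\mathfrak A^{-1},\Omega}$... wait, I should instead simply observe that $\maxest$ is defined as the maximum over $\|\theta_h\|_{\mathfrak p,\Omega}=1$, and by homogeneity of $\BLR_h$ in $\theta_h$ the quantity $\|\BLR_h(\theta_h)\|_{\mathfrak A^{-1},\Omega}/\|\theta_h\|_{\mathfrak p,\Omega}$ is bounded by $\maxest$ for every nonzero $\theta_h$; wait, this requires $\BLR_h$ and $\LP_h$ to be linear in $\theta_h$, which holds since $\LP_h$ is linear (by linearity of~\eqref{eq_definition_LPh}), $\BLF_h$ is linear by assumption, hence $\BLR_h$ is linear. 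Actually the cleanest route is: the condition $k\|\theta_h\|_{\mathfrak p,\Omega}=1$ does not coincide with $\|\theta_h\|_{\mathfrak p,\Omega}=1$, but for any $\theta_h$ with $k\|\theta_h\|_{\mathfrak p,\Omega}=1$ we may set $\widetilde\theta_h \eq k\theta_h$, which satisfies $\|\widetilde\theta_h\|_{\mathfrak p,\Omega}=1$, and then $\|\BLR_h(\theta_h)\|_{\mathfrak A^{-1},\Omega}$ relates to $\|\BLR_h(\widetilde\theta_h)\|_{\mathfrak A^{-1},\Omega}$ — hmm, but $\maxest$ is stated with $\|\theta_h\|_{\mathfrak p,\Omega}=1$, not $k\|\theta_h\|_{\mathfrak p,\Omega}=1$, so there may be a factor of $k$ discrepancy to track. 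I would resolve this by carefully checking the normalization: since \eqref{eq_definition_maxest} normalizes by $\|\theta_h\|_{\mathfrak p}=1$ while \eqref{eq_estimate_rho} normalizes by $k\|\theta_h\|_{\mathfrak p,\Omega}=1$, and the left-hand side of the lemma is linear in $\theta_h$, the homogeneous quantity $|k^2(\mathfrak p w,\theta_h)_\Omega-\beta(w,\LP_h(\theta_h))|\big/\|\theta_h\|_{\mathfrak p,\Omega}$ is bounded by $\maxest$; plugging in $\|\theta_h\|_{\mathfrak p,\Omega}=1/k$ then gives the bound $\maxest/k$ — so either the paper intends the $\maxest$ normalization to actually read $k\|\theta_h\|_{\mathfrak p}=1$, or there is an implicit rescaling. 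I would double-check this point against the paper's conventions; it is a routine bookkeeping matter rather than a genuine obstacle.

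**Main obstacle.**
The only substantive subtlety is the sesquilinearity: one must consistently track complex conjugates through the integration-by-parts identity $(w,\div\bv)_\Omega=-(\grad w,\bv)_\Omega$ and through the weighted Cauchy–Schwarz step, making sure the adjoint coefficients $\MA_\dagger$, $\bb_\dagger$, $\bc_\dagger$, $d_\dagger$ appearing in~\eqref{eq_definition_beta_adjoint} and in the definition of $\BLR_h$ line up exactly with those in the equilibration condition~\eqref{eq_equilibration}. Everything else — the density/trace theorem justifying integration by parts in $\BH_{\GN}(\ddiv,\Omega)$ against $H^1_{\GD}(\Omega)$, and the positivity of $\mathfrak A$ needed to split the norm — is standard.
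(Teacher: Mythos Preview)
Your argument is essentially identical to the paper's: expand the residual via the adjoint form~\eqref{eq_definition_beta_adjoint}, replace the zeroth-order bracket by $\div\BLF_h(\theta_h)$ using~\eqref{eq_equilibration}, integrate by parts (the boundary term vanishes by the $\GD$/$\GN$ trace conditions), and finish with the $\mathfrak A$-weighted Cauchy--Schwarz inequality. The normalization mismatch you flag between~\eqref{eq_definition_maxest} (which uses $\|\theta_h\|_{\mathfrak p}=1$) and~\eqref{eq_estimate_rho} (which uses $k\|\theta_h\|_{\mathfrak p}=1$) is a genuine inconsistency in the paper's bookkeeping---the paper's own proof simply says ``from which the conclusion follows'' without resolving it, and the downstream application in the proof of the guaranteed-bound theorem actually invokes yet a third normalization $k\|\cdot\|_{\mathfrak m}$---so this is a typo in the paper rather than a gap in your reasoning.
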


\begin{proof}
Fix $\theta_h \in Q_h$ and let $u_h \eq \LP_h(\theta_h)$, $\sig_h \eq \BLF_h(\theta_h)$.
In view of~\eqref{eq_definition_beta_adjoint} and~\eqref{eq_equilibration}, we have
\begin{align*}
k^2 (\mathfrak p w,\theta_h)_\Omega-\beta(w,u_h)
&=
(w,k^2 \mathfrak p \theta_h + k^2 d_\dagger u_h+ik\bb_\dagger \cdot \grad u_h)_\Omega
-
(\grad w,\MA_\dagger \grad u_h-ik\bc_\dagger u_h)_\Omega
\\
&=
(w,\div \sig_h)_\Omega
-
(\grad w,\MA_\dagger \grad u_h-ik\bc_\dagger u_h)_\Omega
\\
&=
-
(\grad w,\MA_\dagger \grad u_h-ik\bc_\dagger u_h+\sig_h)_\Omega,
\end{align*}
where use integration by part in the last identity.
We conclude with a Cauchy--Schwarz inequality that
\begin{align*}
|k^2 (\mathfrak pw,\theta_h)_\Omega-\beta(w,u_h)|
&=
|(\AAA \grad w, \AAA^{-1}(\MA_\dagger \grad u_h-ik\bc_\dagger u_h+\sig_h))_\Omega|
\\
&\leq
\|\grad w\|_{\AAA,\Omega}\|
\|\AAA^{-1}(\MA_\dagger \grad u_h-ik\bc_\dagger u_h+\sig_h)\|_{\AAA,\Omega}
\\
&=
\|\grad w\|_{\AAA,\Omega}\|
\|\MA_\dagger \grad u_h-ik\bc_\dagger u_h+\sig_h\|_{\AAA^{-1},\Omega},
\end{align*}
from which the conclusion follows.
\end{proof}

We now establish our guaranteed lower bound for the inf-sup constant of $\beta$.

\begin{theorem}[Guaranteed bounds]
The lower bound
\begin{equation}
\label{eq_T_coercivity}
\Re \beta(u,u+2\LP_h(\pi_h u))
\geq
\left \{
1
-
2\left (\frac{k\hmax}{\vmin}\right )^2
-
2\maxest
\right \}
\enorm{u}_\Omega^2
\end{equation}
holds true for all $u \in H^1_{\GD}(\Omega)$. In particular
\begin{equation}
\label{eq_lower_bound}
\min_{\substack{u \in H^1_{\GD}(\Omega) \\ \enorm{u}_\Omega = 1}}
\max_{\substack{v \in H^1_{\GD}(\Omega) \\ \enorm{v}_\Omega = 1}}
\Re \beta(u,v)
\geq
\gamma_h,
\end{equation}
with
\begin{equation}
\label{eq_definition_gamma_h}
\gamma_h
\eq
\left \{
1
-
2\left (\frac{k\hmax}{\vmin}\right )^2
-
2\maxest
\right \}
\frac{1}{1+2\maxsol}.
\end{equation}
\end{theorem}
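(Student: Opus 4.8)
The plan is to verify that the test function $v = u + 2\LP_h(\pi_h u)$ exhibited in the statement realizes the announced bound. Expanding, $\Re\beta(u,u+2\LP_h(\pi_h u)) = \Re\beta(u,u) + 2\,\Re\beta(u,\LP_h(\pi_h u))$, where I used that $\beta$ is anti-linear in its second argument and $2$ is real. The first summand is controlled below by the G\aa rding inequality \eqref{eq_gaarding}, which produces the negative term $-2k^2\|u\|_{\mathfrak p,\Omega}^2$; the role of the correction $2\LP_h(\pi_h u)$ is precisely to absorb it. For the second summand, the crucial step is to apply the residual control \eqref{eq_estimate_rho} with $\theta_h = \pi_h u$ and the test function $w = u$ — which is admissible because \eqref{eq_estimate_rho} ranges over all of $H^1_{\GD}(\Omega)$, not only the finite element space — and, after homogenizing, to use that $\mathfrak p$ is piecewise constant on $\CT_h$, so that $\pi_h$ is also orthogonal in the $\mathfrak p$-weighted inner product and $(\mathfrak p u,\pi_h u)_\Omega = \|\pi_h u\|_{\mathfrak p,\Omega}^2$ is real and nonnegative. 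This yields
\[
\Re\beta(u,\LP_h(\pi_h u)) \;\geq\; k^2\|\pi_h u\|_{\mathfrak p,\Omega}^2 - \maxest\, k\|\pi_h u\|_{\mathfrak p,\Omega}\,\|\grad u\|_{\mathfrak A,\Omega}.
\]

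Adding the two lower bounds and invoking the Pythagorean identity $\|u\|_{\mathfrak p,\Omega}^2 = \|\pi_h u\|_{\mathfrak p,\Omega}^2 + \|u-\pi_h u\|_{\mathfrak p,\Omega}^2$ (again a consequence of $\mathfrak p$-orthogonality), the two $\mathfrak p$-terms telescope into a single projection error:
\[
\Re\beta(u,u+2\LP_h(\pi_h u)) \;\geq\; \enorm{u}_\Omega^2 - 2k^2\|u-\pi_h u\|_{\mathfrak p,\Omega}^2 - 2\maxest\, k\|\pi_h u\|_{\mathfrak p,\Omega}\,\|\grad u\|_{\mathfrak A,\Omega}.
\]
It then remains to bound each remainder by a multiple of $\enorm{u}_\Omega^2$. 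For the first, the projection estimate \eqref{eq_projection_error} together with $\|\grad u\|_{\mathfrak A,\Omega}\leq\enorm{u}_\Omega$ and $1/\pi^2\leq 1$ gives $2k^2\|u-\pi_h u\|_{\mathfrak p,\Omega}^2 \leq 2(k\hmax/\vmin)^2\,\enorm{u}_\Omega^2$. For the second, $\|\grad u\|_{\mathfrak A,\Omega}\leq\enorm{u}_\Omega$ and $k\|\pi_h u\|_{\mathfrak p,\Omega}\leq\enorm{u}_\Omega$ — the latter from the definition \eqref{eq_energy_norm} of the energy norm and the $L^2$-stability of $\pi_h$ in the relevant weighted norms, cf.\ \eqref{eq_projection_norm} — give $2\maxest\, k\|\pi_h u\|_{\mathfrak p,\Omega}\,\|\grad u\|_{\mathfrak A,\Omega} \leq 2\maxest\,\enorm{u}_\Omega^2$. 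Combining the three displays proves \eqref{eq_T_coercivity}.

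Finally, to deduce \eqref{eq_lower_bound}: if the bracket in \eqref{eq_definition_gamma_h} is nonpositive, then $\gamma_h\leq 0$ and the estimate is trivial, since for every $u$ one of $v,-v$ gives $\Re\beta(u,v)\geq 0$. Otherwise fix $u$ with $\enorm{u}_\Omega=1$; then \eqref{eq_T_coercivity} shows $\Re\beta(u,u+2\LP_h(\pi_h u))>0$, so in particular $u+2\LP_h(\pi_h u)\neq 0$ and I may take $v = (u+2\LP_h(\pi_h u))/\enorm{u+2\LP_h(\pi_h u)}_\Omega$, with $\enorm{v}_\Omega=1$. The triangle inequality and the definition of $\maxsol$, together with $k\|\pi_h u\|_{\mathfrak m,\Omega}\leq k\|u\|_{\mathfrak m,\Omega}\leq\enorm{u}_\Omega$, yield $\enorm{u+2\LP_h(\pi_h u)}_\Omega \leq \enorm{u}_\Omega + 2\enorm{\LP_h(\pi_h u)}_\Omega \leq (1+2\maxsol)\,\enorm{u}_\Omega$; dividing \eqref{eq_T_coercivity} by this quantity gives $\Re\beta(u,v)\geq\gamma_h$, and taking the infimum over $u$ concludes. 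The one genuinely non-routine point is the choice of corrector and, within it, testing the discretely-motivated residual bound \eqref{eq_estimate_rho} against the \emph{exact} function $w=u$: its principal term $k^2(\mathfrak p u,\pi_h u)_\Omega$ then equals $k^2\|\pi_h u\|_{\mathfrak p,\Omega}^2$ and cancels the G\aa rding deficit up to the small, explicitly quantified projection errors. Everything else is bookkeeping with \eqref{eq_projection_error}--\eqref{eq_projection_norm} and the definitions of $\maxest$, $\maxsol$.
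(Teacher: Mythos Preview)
Your proof is correct and follows the paper's own argument essentially line by line: G\aa rding inequality for $\Re\beta(u,u)$, Pythagorean splitting $\|u\|_{\mathfrak p,\Omega}^2=\|\pi_h u\|_{\mathfrak p,\Omega}^2+\|u-\pi_h u\|_{\mathfrak p,\Omega}^2$, the residual control~\eqref{eq_estimate_rho} tested with $\theta_h=\pi_h u$ and $w=u$ to recover the compensating term $k^2\|\pi_h u\|_{\mathfrak p,\Omega}^2$, and finally the triangle inequality combined with the definition of $\maxsol$ and~\eqref{eq_projection_norm} to bound $\enorm{u+2\LP_h(\pi_h u)}_\Omega$.

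One small point to tighten: your claim $k\|\pi_h u\|_{\mathfrak p,\Omega}\leq\enorm{u}_\Omega$ does not follow from~\eqref{eq_energy_norm} and~\eqref{eq_projection_norm} as written, since the energy norm carries the weight $\mathfrak m$, not $\mathfrak p$, and no relation $\mathfrak p\leq\mathfrak m$ is assumed. The paper handles the corresponding step by writing the homogenized residual bound with $k\|\pi_h u\|_{\mathfrak m,\Omega}$ on the right, after which~\eqref{eq_projection_norm} and $k\|u\|_{\mathfrak m,\Omega}\leq\enorm{u}_\Omega$ close the estimate directly; in the Helmholtz case $\mathfrak p=\mathfrak m$ the two formulations coincide.
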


\begin{proof}
Considering an arbitrary $u \in H^1_{\GD}(\Omega)$, we start with the G\aa rding
inequality stated in~\eqref{eq_gaarding}, namely
\begin{equation*}
\Re \beta(u,u) \geq \enorm{u}_\Omega^2 - 2k^2\|u\|_{\mathfrak p,\Omega}^2.
\end{equation*}
We then use~\eqref{eq_projection_error}, showing that
\begin{equation*}
k^2 \|u\|_{\mathfrak p,\Omega}^2
=
k^2 \|\pi_h u\|_{\mathfrak p,\Omega}^2
+
k^2\|u-\pi_h u\|_{\mathfrak p,\Omega}^2
\leq
\left (
\frac{k\hmax}{\pi \vmin}
\right )^2
\enorm{u}_\Omega^2
+
k^2 \|\pi_h u\|_{\mathfrak p,\Omega}^2
\end{equation*}
from which we infer
\begin{equation}
\label{tmp_inf_sup_gaarding}
\Re \beta(u,u)
\geq
\left \{
1-
2\left (
\frac{k\hmax}{\pi \vmin}
\right )^2
\right \}\enorm{u}_\Omega^2 - 2k^2\|\pi_h u\|_{\mathfrak p,\Omega}^2.
\end{equation}
We now invoke~\eqref{eq_estimate_rho}, which allows us to write that
\begin{multline*}
|k^2 (\mathfrak p w,\pi_h u)_\Omega-\beta(w,\LP_h(\pi_h u))|
\leq
\maxest \|\grad w\|_{\AAA,\Omega} k\|\pi_h u\|_{\mathfrak m,\Omega}
\\
\leq
\maxest \|\grad w\|_{\AAA,\Omega} k\|u\|_{\mathfrak m,\Omega}
\leq
\maxest \enorm{w}_\Omega \enorm{u}_\Omega,
\end{multline*}
for all $w \in H^1_{\GD}(\Omega)$ and from which we deduce that
\begin{equation}
\label{tmp_inf_sup_correction}
\Re \beta(u,\LP_h(\pi_h u))
\geq
k^2 \|\pi_h u\|_{\mathfrak p,\Omega}^2
-
\maxest \enorm{u}_\Omega^2.
\end{equation}
At this point~\eqref{eq_T_coercivity} follows by adding twice~\eqref{tmp_inf_sup_correction}
to~\eqref{tmp_inf_sup_gaarding}, since these estimates holds for all $u \in H^1_{\GD}(\Omega)$.

To establish~\eqref{eq_lower_bound} from~\eqref{eq_T_coercivity}, we first fix
$u \in H^1_{\GD}(\Omega)$ and observe that picking $v^\star \eq u + 2\LP_h(\pi_h u)$,
we have
\begin{equation*}
\max_{\substack{v \in H^1_{\GD}(\Omega) \\ \enorm{v}_\Omega = 1}}
\Re \beta(u,v)
\geq
\frac{1}{\enorm{v^\star}_\Omega} \Re \beta(u,v^\star)
\geq
\frac{1}{\enorm{v^\star}_\Omega}
\left \{
1
-
2\left (\frac{k\hmax}{\vmin}\right )^2
-
2\maxest
\right \}
\enorm{u}_\Omega^2
\end{equation*}
Then the desired estimate follows from~\eqref{eq_T_coercivity}
together with the fact that
\begin{equation*}
\enorm{v^\star}_\Omega
\leq
\enorm{u}_\Omega + 2\maxsol k\|\pi_h u\|_{\mathfrak m,\Omega}
\leq
\enorm{u}_\Omega + 2\maxsol k\|u\|_{\mathfrak m,\Omega}
\leq
(1+2\maxsol)\enorm{u}_\Omega,
\end{equation*}
where we employed~\eqref{eq_projection_norm}.
\end{proof}

\section{Efficiency}
\label{section_upper_bound}

In this section, we show that the lower bound proposed
in~\eqref{eq_lower_bound} is efficient. By that, we mean
that if $\beta$ is indeed inf-sup stable, the numerical
lower bound is $\gamma_h$ positive and does not arbitrarily underestimate
the optimal inf-sup constant $\gamma$, provided the finite element
space $V_h$ is sufficiently rich and that the flux reconstruction
$\BLF$ is suitably designed.

From here on, we therefore assume that $\beta$ is inf-sup stable,
namely that
\begin{equation}
\label{eq_inf_sup_beta}
\gamma
\eq
\min_{\substack{u \in H^1_{\GD}(\Omega) \\ \enorm{u}_\Omega = 1}}
\max_{\substack{v \in H^1_{\GD}(\Omega) \\ \enorm{v}_\Omega = 1}}
\Re \beta(u,v)
>
0
\end{equation}
We will also denote by
\begin{equation}
\label{eq_continuity_beta}
M
\eq
\max_{\substack{u \in H^1_{\GD}(\Omega) \\ \enorm{u}_\Omega = 1}}
\max_{\substack{v \in H^1_{\GD}(\Omega) \\ \enorm{v}_\Omega = 1}}
|\beta(u,v)|
\end{equation}
the continuity constant of $\beta$ in the chosen energy norm.
We can then introduce the continuous solution operator
\begin{equation}
\label{eq_definition_LP}
b(w,\LP(\theta_h)) = k^2(\mathfrak p w,\theta_h)
\end{equation}
for all $\theta_h$ and $w \in H^1_{\GD}(\Omega)$.

For Helmholtz problems, $M$ is bounded from above by a generic
$k$-independent constant. In the absence of dissipation, we
usually have $M = 1$. Otherwise, it depends on the strength of
the absorption, or on the parameters of the perfectly matched layers
when they are employed.

\subsection{Vertex patches}

In this section, we denote by $\CV_h$ the set of vertices of the mesh $\CT_h$.
For each $\ba \in \CV_h$, we denote by $\pa \in \CP_1(\CT_h) \cap H^1(\Omega)$
its hat function, i.e., this only continuous piecewise affine function such that
$\pa(\bb) = \delta_{\ba,\bb}$ for all $\bb \in \CV_h$, where $\delta$ stands
for the Kronecker symbol. We denote by $\CTa \subset \CT_h$ the set of elements
having $\ba$ as a vertex. Then, the open domain covered by the elements of $\CTa$
is denoted by $\oma$, and corresponds to the support of $\pa$.

\subsection{Local wavespeed and contrast}

For all $\ba \in \CV_h$, we let
\begin{equation*}
\label{eq_wavespeed_oma}
\vel_{\oma}
\eq
\sqrt{\frac{\min_{K \in \CTa} p_K}{\max_{K \in \CTa} \alpha_K^\sharp}},
\qquad
\LK_{\oma}
\eq
\sqrt{\frac{\max_{K \in \CTa} \alpha_K^\sharp}{\min_{K \in \CTa} \alpha_K^\flat}}.
\end{equation*}
We also denote by $h_{\oma}$ the diameter of $\oma$.

\subsection{Local function spaces}

The following spaces associated to vertex patches will be useful.
For $\ba \in \CV_h$, we let $\gamma_{\ba}^{\rm c} \subset \partial \oma$
be the set covered by the faces $F \in \CF_h$ that share the vertex $\ba$
such that $F \subset \GN$. We note that for interior vertices
$\gamma_{\ba}^{\rm c} = \emptyset$. We also let
$\gamma_{\ba} \eq \partial \oma \setminus \gamma_{\ba}^{\rm c}$.
We then let
$\BH_0(\ddiv,\oma) \eq \BH_{\gamma_{\ba}}(\ddiv,\oma)$.
We further let $L^2_0(\oma) \eq \div \BH_0(\ddiv,\oma)$. This space coincides with
$L^2(\oma)$ if $\gamma_{\ba}^{\rm c} \neq \emptyset$, and consists of zero mean value
functions otherwise.

\subsection{Localized flux reconstruction}
\label{section_flux_reconstruction}

We are now in place to propose a concrete strategy to compute
a flux reconstruction $\BLF_h: Q_h \to \RT_{p+2}(\CT_h) \cap \BH_{\GN}(\ddiv,\Omega)$
satisfying~\eqref{eq_equilibration}. It is defined through the solve
of vertex patch mixed finite element problems.

Given $\theta_h \in Q_h$, for all vertices $\ba \in \CT_h$, we introduce the
divergence constraint
\begin{multline*}
\Da(\theta_h)
\eq
\pa (k^2 \mathfrak p \theta_h + k^2 d_\dagger \LP_h(\theta_h) + ik\bb_\dagger \cdot \grad \LP_h(\theta_h))
\\
-
\grad \pa \cdot (-ik\bc_\dagger\LP_h(\theta_h)+\MA_\dagger\grad \LP_h(\theta)) \in \CP_{p+2}(\CTa)
\end{multline*}
and the target
\begin{equation*}
\Ta(\theta_h)
\eq
\pa (\MA_\dagger \grad \LP_h(\theta_h)-ik\bc_\dagger \LP_h(\theta_h))
\in \BCP_{p+1}(\CTa).
\end{equation*}
These data enter the construction of $\BLF_h$ as follows.
\begin{subequations}
\label{eq_definition_BLF}
For all $\ba \in \CV_h$, we will see below that
\begin{equation}
\label{eq_definition_BLFa}
\BLFa(\theta_h)
\eq
\arg
\min_{\substack{
\sig_h \in \RT_{p+2}(\CTa) \cap \BH_0(\ddiv,\oma)
\\
\div \sig_h(\theta_h)
=
\Da
}}
\|\sig_h+\Ta(\theta_h)\|_{\AAA^{-1},\oma}
\end{equation}
is a sound defintion. Whenever useful, we will also implicitely extend $\BLFa(\theta_h)$
by $\bo$ to $\Omega$, which produces an element of $\BH_{\GN}(\ddiv,\Omega)$. We then let
\begin{equation}
\BLF_h(\theta_h)
\eq
\sum_{\ba \in \CV_h} \BLFa(\theta_h) \in \BH_{\GN}(\ddiv,\Omega).
\end{equation}
\end{subequations}

Before deriving key properties of $\BLF_h$, we immediately make a remark useful
at different places.

\begin{lemma}[Data identity]
For all $\theta_h \in Q_h$, $\ba \in \CV_h$ and $w \in H^1(\oma)$, we have
\begin{equation}
\label{eq_data_identity}
b(\pa w,\LP(\theta_h)-\LP_h(\theta_h)) = (\grad w,\Ta(\theta_h))_{\oma}-(w,\Da(\theta_h))_{\oma}.
\end{equation}
\end{lemma}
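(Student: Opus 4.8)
The plan is to unfold both sides of the claimed identity directly from the definitions of $\LP$, $\Da$, $\Ta$, and the sesquilinear form in the form~\eqref{eq_definition_beta_adjoint}. First I would write the left-hand side using the definition~\eqref{eq_definition_LP} of the continuous solution operator $\LP$, noting that $\pa w \in H^1_{\GD}(\oma)$ extends by zero to a legitimate test function in $H^1_{\GD}(\Omega)$ (here the support property of $\pa$ and the construction of $\gamma_{\ba}$ matter), so that $b(\pa w,\LP(\theta_h)) = k^2(\mathfrak p\,\pa w,\theta_h)_{\oma}$. For the $\LP_h$ term I would use~\eqref{eq_definition_beta_adjoint} with $u = \pa w$ and $v = \LP_h(\theta_h)$, which gives
\begin{equation*}
\beta(\pa w,\LP_h(\theta_h))
=
(\pa w,-k^2 d_\dagger \LP_h(\theta_h)-ik\bb_\dagger\cdot\grad\LP_h(\theta_h))_{\oma}
+
(\grad(\pa w),\MA_\dagger\grad\LP_h(\theta_h)-ik\bc_\dagger\LP_h(\theta_h))_{\oma}.
\end{equation*}

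Next I would apply the Leibniz rule $\grad(\pa w) = w\grad\pa + \pa\grad w$ in the second inner product, and regroup the terms that carry the factor $\pa$ versus those that carry $\grad\pa$. The terms with $\pa$ combine, up to the sign conventions, into exactly $-(\pa w,\Da(\theta_h))_{\oma}$ plus the part of $\Da$ coming from $\grad\pa$; more precisely, collecting the $w$-against-$\grad\pa$ contribution $(w\grad\pa,\MA_\dagger\grad\LP_h(\theta_h)-ik\bc_\dagger\LP_h(\theta_h))_{\oma}$ and recognizing it, via the definition of $\Da$, as the missing piece, the whole $\LP_h$ contribution reorganizes into $k^2(\mathfrak p\,\pa w,\theta_h)_{\oma} + (w,\Da(\theta_h))_{\oma} - (\grad w,\Ta(\theta_h))_{\oma}$ after also identifying the $\pa\grad w$ contribution with $(\grad w,\Ta)_{\oma}$ through the definition of $\Ta$. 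Subtracting, the $k^2(\mathfrak p\,\pa w,\theta_h)_{\oma}$ terms cancel and one is left with the right-hand side $(\grad w,\Ta)_{\oma}-(w,\Da(\theta_h))_{\oma}$.

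The computation is essentially bookkeeping, so the main subtlety is not analytical depth but making sure the sign conventions and the split of $\Da$ into its ``$\pa$-part'' and its ``$\grad\pa$-part'' line up exactly with the adjoint form~\eqref{eq_definition_beta_adjoint} rather than the primal form~\eqref{eq_definition_beta} — a sign or a conjugation error here would silently break the identity. A secondary point worth a sentence is the justification that $\pa w$ is an admissible test function: its support lies in $\overline{\oma}$, it vanishes on $\gamma_{\ba}$ (in particular on the Dirichlet part of $\partial\oma$ that lies on $\GD$), and it belongs to $H^1_{\GD}(\Omega)$ after extension by zero, so that both~\eqref{eq_definition_LP} and~\eqref{eq_definition_beta_adjoint} may be invoked with it. Once those two points are handled, the identity follows by collecting terms.
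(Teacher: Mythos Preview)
Your argument is correct and mirrors the paper's proof: both expand the definitions of $\Da$ and $\Ta$, apply the product rule $\grad(\pa w)=\pa\grad w+w\grad\pa$, and identify the result with $k^2(\mathfrak p\,\pa w,\theta_h)_\Omega-\beta(\pa w,\LP_h(\theta_h))$ via the adjoint form~\eqref{eq_definition_beta_adjoint}; the only cosmetic difference is that the paper starts from the right-hand side and works toward the left. One small caveat on your side remark about admissibility: the claim that $\pa w$ vanishes on all of $\gamma_{\ba}$ is not correct for vertices $\ba\in\overline{\GD}$, since $\pa$ does not vanish on the Dirichlet faces sharing $\ba$---the paper does not discuss this point either, and it does not affect the algebraic identity you are asked to prove.
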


\begin{proof}
For shortness, we let $u_h \eq \LP_h(\theta_h)$. Then, we have
\begin{equation*}
(\grad w,\Ta(\theta_h))_{\oma}
=
(\grad w,\pa (\MA_\dagger \grad u_h-ik\bc_\dagger u_h))_{\oma}
=
(\pa\grad w,\MA_\dagger \grad u_h-ik\bc_\dagger u_h)
\end{equation*}
and
\begin{align*}
(w,\Da(\theta_h))_{\oma}
&=
(w,\pa (k^2 \mathfrak p \theta_h + k^2 d_\dagger u_h+ik\bb_\dagger \cdot \grad u_h))_{\oma}
-
(w,\grad \pa \cdot (-ik\bc_\dagger u_h+\MA_\dagger \grad u_h))_{\oma}
\\
&=
(\pa w,k^2 \mathfrak p \theta_h) + (\pa w,k^2 d_\dagger u_h+ik\bb_\dagger \cdot \grad u_h)
-
(w\grad \pa,-ik\bc_\dagger u_h+\MA_\dagger \grad u_h).
\end{align*}
Using the product rule $\grad (\pa w) = \pa \grad w + w\grad \pa$, we have
\begin{align*}
(\grad w,\Ta)_{\oma}
-
(w,\Da)_{\oma}
&=
(\pa w,k^2 \mathfrak p\theta_h)
\\
&-
\left \{
(\pa w,-k^2 d_\dagger u_h-ik\bb_\dagger \cdot \grad u_h)
+
(\grad (\pa w),\MA_\dagger\grad u_h-ik\bc_\dagger u_h)
\right \}
\\
&=
k^2 (\mathfrak p\pa w,\theta_h) - b(\pa w,\LP_h(\theta_h))
\\
&=
b(\pa w,\LP(\theta_h)-\LP_h(\theta_h)),
\end{align*}
where we used the expression for $\beta$ in~\eqref{eq_definition_beta_adjoint}.
\end{proof}

\subsection{Efficiency of the flux reconstruction}

We can now show that the flux reconstruction in~\eqref{eq_definition_BLF} 
lead to a small residual $\BLR(\theta_h)$ whenever the finite element error
$(\LP-\LP_h)(\theta_h)$ is small.

\begin{lemma}[Discrete stable minimization]
For all $\theta_h \in Q_h$, the definition of $\BLFa(\theta_h)$ in~\eqref{eq_definition_BLFa}
is well-posed. $\BLFa(\theta_h)$ depends linearly on $\theta_h$, and we have
\begin{equation}
\label{eq_stable_minimization}
\|\BLFa(\theta_h)+\Ta(\theta_h)\|_{\AAA^{-1},\oma}
\leq c(\kappa)
\min_{\substack{
\sig \in \BH_0(\ddiv,\oma)
\\
\div \sig
=
\Da(\theta_h)
}}
\|\sig+\Ta(\theta_h)\|_{\AAA^{-1},\oma}.
\end{equation}
\end{lemma}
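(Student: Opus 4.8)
The plan is to recognize the minimization problem in~\eqref{eq_definition_BLFa} as an instance of the equilibrated flux reconstruction on a single vertex patch, for which polynomial-degree-robust stability is by now classical (see e.g.~\cite{braess_schoberl_2008a,ern_vohralik_2015a}). The three things to verify are: (i) that the constraint set in~\eqref{eq_definition_BLFa} is nonempty, so the argmin is well-defined; (ii) that the resulting map $\theta_h \mapsto \BLFa(\theta_h)$ is linear; and (iii) the stability estimate~\eqref{eq_stable_minimization}, comparing the discrete minimum over $\RT_{p+2}(\CTa) \cap \BH_0(\ddiv,\oma)$ with the continuous minimum over all of $\BH_0(\ddiv,\oma)$.

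First I would treat nonemptiness. The constraint $\div \sig_h = \Da(\theta_h)$ with $\sig_h \in \RT_{p+2}(\CTa) \cap \BH_0(\ddiv,\oma)$ is solvable precisely when $\Da(\theta_h)$ lies in the range of the divergence on that space, which — by the standard theory of the lowest-order-compatible Raviart--Thomas complex on a patch — amounts to $\Da(\theta_h) \in \CP_{p+2}(\CTa)$ together with the compatibility condition $\int_{\oma}\Da(\theta_h) = 0$ when $\gamma_{\ba}^{\rm c} = \emptyset$ (the interior/pure-Neumann vertex case); equivalently $\Da(\theta_h) \in L^2_0(\oma)$ in the notation of the excerpt. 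The polynomial degree is already guaranteed by the definition of $\Da$, so the only thing to check is the zero-mean condition at interior vertices, and this is exactly where the Data identity~\eqref{eq_data_identity} comes in: taking $w \equiv 1$ there (legitimate since $1 \in H^1(\oma)$), the left-hand side $b(\pa \cdot 1, \LP(\theta_h) - \LP_h(\theta_h))$ vanishes because $\pa \in \CP_1(\CTa) \cap H^1_{\gamma_{\ba}}(\oma)$... — more carefully, one uses that $b(\varphi^{\ba} w_h, \cdot)$ against a discrete solution is controlled, but the cleanest route is the Galerkin-orthogonality-type argument: at an interior vertex $\varphi^{\ba} \in V_h$ locally, so testing~\eqref{eq_definition_LPh} against $w_h = \varphi^{\ba}$ gives $b(\varphi^{\ba}, \LP_h(\theta_h)) = k^2(\mathfrak p \varphi^{\ba}, \theta_h)$, which via~\eqref{eq_data_identity} with $w=1$ forces $(1, \Da(\theta_h))_{\oma} = 0$. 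Hence the constraint set is nonempty, the objective is a strictly convex quadratic functional on an affine subspace of a finite-dimensional space, so the minimizer exists and is unique; linearity in $\theta_h$ follows because both $\Da$ and $\Ta$ depend linearly on $\theta_h$ (through $\LP_h$, which is linear by~\eqref{eq_definition_LPh}) and the argmin of a quadratic with linearly-varying affine constraint and linearly-varying linear term depends linearly on the data.

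The stability bound~\eqref{eq_stable_minimization} is the heart of the matter and the step I expect to be the main obstacle, though it is not a new obstacle: it is the polynomial-degree-robust local efficiency of equilibrated flux reconstruction, established in~\cite{braess_schoberl_2008a} in two dimensions and~\cite{ern_vohralik_2015a} in three dimensions (see also the references therein). The statement there is exactly that the discrete constrained minimum over $\RT_{p+2} \cap \BH_0(\ddiv,\oma)$ of $\|\sig_h + \Ta\|_{\mathfrak A^{-1},\oma}$ is bounded, up to a constant depending only on the shape-regularity $\kappa$ (and crucially not on $p$), by the corresponding continuous minimum over $\BH_0(\ddiv,\oma)$, provided the data $\Da$ and $\Ta$ have the stated polynomial degrees $p+2$ and $p+1$ on each element and the compatibility conditions hold — and the polynomial degrees in the definitions of $\Da$ and $\Ta$ have been chosen precisely so that these hypotheses are met (the $\MA_\dagger \grad \LP_h$ term is in $\BCP_{p}$, so $\varphi^{\ba}\MA_\dagger\grad\LP_h \in \BCP_{p+1}$, and the divergence-type terms land in $\CP_{p+2}$). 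A minor wrinkle relative to the classical references is that $\mathfrak A$ here is a piecewise-constant symmetric-positive-definite matrix weight rather than the identity; this only enters through an equivalence of norms with constants depending on the local contrast, which can either be absorbed into $c(\kappa)$ under the tacit assumption that the contrast is $\kappa$-controlled, or carried explicitly — but since the statement writes $c(\kappa)$, I would simply invoke the scalar-weight version on each element and cite the spectral equivalence, noting the matrix case follows by a congruence transformation on each simplex. I would therefore structure the proof as: verify nonemptiness via the Data identity at interior vertices (and triviality at boundary vertices where $L^2_0(\oma) = L^2(\oma)$), deduce well-posedness and linearity from finite-dimensional convex quadratic minimization, then quote the degree-robust stability result of~\cite{braess_schoberl_2008a,ern_vohralik_2015a} with the matrix-weight adaptation.
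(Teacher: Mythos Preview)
Your proposal is correct and follows essentially the same route as the paper: verify the compatibility condition $(1,\Da(\theta_h))_{\oma}=0$ via the data identity~\eqref{eq_data_identity} with $w=1$ together with $\pa\in V_h$ (Galerkin orthogonality), then invoke the polynomial-degree-robust stable minimization results from the equilibrated-flux literature. The only minor slip is bibliographic: \cite{braess_schoberl_2008a} establishes the equilibration construction but not the $p$-robust constant; the paper instead cites \cite{braess_pillwein_schoberl_2009a,ern_vohralik_2021a,chaumontfrelet_vohralik_2024a}, which are the correct sources for~\eqref{eq_stable_minimization}.
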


\begin{proof}
Following~\cite{braess_pillwein_schoberl_2009a,chaumontfrelet_vohralik_2024a,ern_vohralik_2021a},
the well-posedness of~\eqref{eq_definition_BLFa} and the estimate in~\eqref{eq_stable_minimization}
follow if we can show that the compatibility condition
\begin{equation*}
(1,\Da(\theta_h))_{\oma} = 0
\end{equation*}
holds true for all vertices $\ba \in \CV_h \setminus \overline{\GD}$.
To do so, we simply invoke~\eqref{eq_data_identity}, giving
\begin{equation*}
(1,\Da(\theta_h))_{\oma} = -b(\pa,\LP(\theta_h)-\LP_h(\theta_h)).
\end{equation*}
Due to the respective definitions of of $\LP(\theta_h)$ and $\LP_h(\theta_h)$
in~\eqref{eq_definition_LP} and~\eqref{eq_definition_LPh}, the right-hand side vanishes
since $\pa \in V_h$. This concludes the proof.
\end{proof}

\begin{lemma}[Local efficiency]
For all $\theta_h \in Q_h$ and $\ba \in \CV_h$, we have
\begin{equation}
\label{eq_local_efficiency}
\min_{\substack{
\sig \in \BH_0(\ddiv,\oma)
\\
\div \sig
=
\Da
}}
\|\sig+\Ta\|_{\AAA^{-1},\oma}
\leq c(\kappa)
M
\left (
\frac{kh_{\oma}}{p\vel_{\oma}}
+
\LK_{\oma}
\right )
\enorm{\LP(\theta_h)-\LP_h(\theta_h)}_{\oma}.
\end{equation}
\end{lemma}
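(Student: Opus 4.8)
The plan is to bound the constrained minimization on the left by choosing an explicit (not necessarily optimal) competitor $\sig$, built from the residual functional associated with $\LP(\theta_h)-\LP_h(\theta_h)$ restricted to the patch $\oma$. The key observation is the data identity~\eqref{eq_data_identity}: for every $w \in H^1(\oma)$ we have $(\grad w,\Ta)_{\oma}-(w,\Da)_{\oma} = b(\pa w,\LP(\theta_h)-\LP_h(\theta_h))$, so the pair $(\Ta,\Da)$ encodes exactly a localized residual of the error $\be_h \eq \LP(\theta_h)-\LP_h(\theta_h)$. Thus the minimization on the left measures, in the dual $\|\cdot\|_{\mathfrak A^{-1},\oma}$ norm, the size of the linear functional $w \mapsto b(\pa w,\be_h)$ on $H^1_{\gamma_{\ba}}(\oma)$ (this is the standard Prager--Synge/equilibration duality, exactly as in the first lemma of the excerpt, but localized). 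So the first step is to identify the left-hand side with
\[
\sup_{\substack{w \in H^1_{\gamma_{\ba}}(\oma)\\ \|\grad w\|_{\mathfrak A,\oma}=1}} \bigl| b(\pa w,\be_h) \bigr|
\]
up to a factor $c(\kappa)$; this identification is the continuous inf-sup stability of the divergence operator with the correct boundary condition on $\oma$, which requires $(1,\Da)_{\oma}=0$ on interior/Neumann vertices — established in the previous lemma.

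The second step is to estimate $|b(\pa w,\be_h)|$ directly from the definition of $b$ in~\eqref{eq_definition_beta}. Writing $\pa w$ as the test function, $b(\pa w,\be_h)$ splits into a zeroth-order part $(-k^2 d\, \be_h + ik\bc\cdot\grad \be_h,\pa w)_{\oma}$ and a first-order part $(ik\bb\, \be_h + \MA\grad \be_h,\grad(\pa w))_{\oma}$; using $\grad(\pa w)=\pa\grad w + w\grad\pa$ and Cauchy--Schwarz one bounds everything by $M$ (the continuity constant of $\beta$ in the energy norm) times the energy norms of $\pa w$ and $\be_h$ on $\oma$. Indeed, the cleanest route is to note $b(\pa w,\be_h)=\beta(\be_h,\overline{\pa w})$ up to conjugation, hence $|b(\pa w,\be_h)|\le M\,\enorm{\pa w}_{\oma}\enorm{\be_h}_{\oma}$. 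So it remains to control $\enorm{\pa w}_{\oma}$ by $\|\grad w\|_{\mathfrak A,\oma}$: the gradient term gives $\|\pa\grad w + w\grad\pa\|_{\mathfrak A,\oma}\lesssim \|\grad w\|_{\mathfrak A,\oma} + \|(\grad\pa) w\|_{\mathfrak A,\oma}$, and since $|\grad\pa|\lesssim \kappa/h_{\oma}$ a Poincaré--Friedrichs inequality on the patch (valid because $w$ vanishes on $\gamma_{\ba}$, or has zero mean for purely interior vertices) turns $\|w/h_{\oma}\|$ into $\|\grad w\|$ with a $c(\kappa)$ factor; the anisotropy of $\mathfrak A$ is absorbed into $\LK_{\oma}$. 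The $k$-term $k\|\pa w\|_{\mathfrak m,\oma}$ is handled the same way, producing the factor $kh_{\oma}/\vel_{\oma}$ after recalling $\vel_{\oma}^2 = (\min p_K)/(\max\alpha_K^\sharp)$ and $\mathfrak m$-vs-$\mathfrak p$ comparisons on the patch; the polynomial-degree factor $1/p$ comes from using the sharper Poincaré constant on the subspace where $w$ is orthogonal to the lower-degree part, exactly as in the polynomial-degree-robust equilibration literature cited.

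The main obstacle is the bookkeeping in the second step: tracking how the patch constants $\vel_{\oma}$, $\LK_{\oma}$, $h_{\oma}$ and the degree $p$ enter, and in particular getting the $1/p$ scaling in front of the $k$-term rather than the gradient term. Concretely, one must be careful that the Poincaré inequality is applied to $w$ (so the clean $1/p$ is available for $\|w\|$) and not to $\pa w$ directly, and that the multiplication by the bounded function $\pa$ (with $0\le\pa\le1$) does not degrade the estimate. The anisotropy handling — replacing $\|\cdot\|_{\mathfrak A,\oma}$ by scalar $L^2$ norms and back — is routine but is where the $\LK_{\oma}$ factor is unavoidable and must be kept honest. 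Everything else (the duality identification in step one, the use of~\eqref{eq_data_identity}, the compatibility condition) is standard equilibration machinery once the localized residual viewpoint is adopted.
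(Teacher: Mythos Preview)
Your duality identification in step~1 is essentially what the paper does (the paper phrases it through the Euler--Lagrange equations: the Lagrange multiplier $\xi$ satisfies $\grad\xi=\mathfrak A^{-1}(\sig+\Ta)$, so $\|\sig+\Ta\|_{\mathfrak A^{-1},\oma}=\|\grad\xi\|_{\mathfrak A,\oma}$, and $\xi$ plays the role of your maximizer $w$). The gap is in step~2, and it is precisely the $1/p$ factor.

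You propose to bound $|b(\pa w,\be_h)|\le M\,\enorm{\pa w}_{\oma}\enorm{\be_h}_{\oma}$ directly and then control $\enorm{\pa w}_{\oma}$ by $\|\grad w\|_{\mathfrak A,\oma}$ via Poincar\'e on the patch. This yields the $\LK_{\oma}$ term correctly, but for the $k$-term it only gives $k\|\pa w\|_{\mathfrak m,\oma}\lesssim (kh_{\oma}/\vel_{\oma})\|\grad w\|_{\mathfrak A,\oma}$ with \emph{no} $1/p$. Your claim that the $1/p$ comes from a ``sharper Poincar\'e constant on the subspace where $w$ is orthogonal to the lower-degree part'' is not justified: the dual maximizer $w$ (equivalently the paper's $\xi$) is an arbitrary element of $H^1_{\gamma_{\ba}^{\rm c}}(\oma)\cap L^2_0(\oma)$ and carries no orthogonality to $\CP_{p-1}$. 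The polynomial-degree-robust equilibration results you allude to (Braess--Pillwein--Sch\"oberl, Ern--Vohral\'ik) establish the \emph{previous} lemma, namely that the discrete minimum is bounded by the continuous one with a $p$-independent constant; they do not provide a $1/p$ Poincar\'e inequality for generic $H^1$ functions.

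The missing idea is Galerkin orthogonality. In the paper, before applying the continuity of $\beta$ one subtracts a quasi-interpolant: since $J_h\xi\in\CP_{p-1}(\CTa)\cap H^1_{\gamma_{\ba}^{\rm c}}(\oma)$ one has $\pa J_h\xi\in V_h$, hence $b(\pa J_h\xi,\be_h)=0$ and
\[
\|\grad\xi\|_{\mathfrak A,\oma}^2
= b(\pa\xi,\be_h)
= b(\pa(\xi-J_h\xi),\be_h)
\le M\,\enorm{\pa(\xi-J_h\xi)}_{\oma}\enorm{\be_h}_{\oma}.
\]
Now the $hp$-approximation property of the Karkulik--Melenk interpolant gives $\|\xi-J_h\xi\|_{\oma}\le c(\kappa)(h_{\oma}/p)\|\grad\xi\|_{\oma}$, which is where the $1/p$ genuinely enters. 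Without this subtraction step your argument proves the estimate with $kh_{\oma}/\vel_{\oma}$ in place of $kh_{\oma}/(p\vel_{\oma})$, which is strictly weaker than the stated lemma.
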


\begin{proof}
The Euler--Lagrange equations defining the miminizer in~\eqref{eq_local_efficiency}
consists in finding $\sig \in \BH_0(\ddiv,\oma)$ and $\xi \in L^2_0(\oma)$ such that
\begin{equation*}
\left \{
\begin{array}{rcll}
(\AAA^{-1} \bv,\sig)_{\oma} - (\div \bv,\xi)_{\oma}
&=&
-(\AAA^{-1} \bv,\Ta)_{\oma}
&
\forall \bv \in \BH_0(\ddiv,\oma),
\\
(w,\div \sig)_{\oma}
&=&
(q,\Da)_{\oma}
&
\forall w \in L^2_0(\oma).
\end{array}
\right .
\end{equation*}
From the first equation, we infer that $\xi \in H^1_{\gamma_{\ba}^{\rm c}}(\oma)$ with
$\grad \xi = \AAA^{-1}(\sig+\Ta)$,
and therefore
\begin{equation*}
\|\sig+\Ta\|_{\AAA^{-1},\oma}
=
\|\grad \xi\|_{\AAA,\oma}.
\end{equation*}
By using a test function $w \in H^1_{\gamma_{\ba}^{\rm c}}(\oma) \cap L^2_0(\oma)$
in the second equation, we have
\begin{equation}
\label{tmp_equation_xi}
(\AAA\grad \xi,\grad w)_{\oma}
=
(\Ta,\grad w)_{\oma}
+
(\sig,\grad w)_{\oma}
=
(\Ta,\grad w)_{\oma}
-
(\Da,w)_{\oma}.
\end{equation}
Recalling~\eqref{tmp_equation_xi} and the Galerkin orthogonality property
satisfied by $\LP_h(\theta_h)$, it follows that
\begin{align*}
\|\grad \xi\|_{\AAA,\oma}^2
&=
b(\pa \xi,\LP(\theta_h)-\LP_h(\theta_h)),
\\
&=
b(\pa \xi-\pa J_h\xi,\LP(\theta_h)-\LP_h(\theta_h)),
\\
&\leq
M \enorm{\LP(\theta_h)-\LP_h(\theta_h)}_{\oma}\enorm{\pa \xi-\pa J_h\xi}_{\oma}.
\end{align*}
where
$J_h: H^1_{\gamma_{\ba}^{\rm c}}(\oma) \to H^1_{\gamma_{\ba}^{\rm c}}(\oma) \cap \CP_{p-1}(\CTa)$
is the quasi-interpolation operator from~\cite{karkulik_melenk_2015a}. We can then
write on the one hand that
\begin{equation*}
k\|\pa \xi-\pa J_h \xi\|_{\mathfrak p,\oma}
\leq
k\|\xi-J_h \xi\|_{\mathfrak p,\oma}
\leq c(\kappa)
\frac{kh_{\oma}}{p \vel_{\oma}}\|\grad \xi\|_{\AAA,\oma}
\end{equation*}
and on the other hand that
\begin{equation*}
\|\grad(\pa \xi-\pa J_h\xi)\|_{\AAA,\oma}
\leq c(\kappa)
\max_{K \in \CTa} \sqrt{\alpha_{K}^\sharp}
\left (
h_{\oma}^{-1} \|\xi-J_h \xi\|_{\oma}
+
\|\grad(\xi-J_h\xi)\|_{\oma}
\right )
\leq c(\kappa)
\LK_{\oma} \|\grad \xi\|_{\AAA,\oma}.
\end{equation*}
Combining these bounds gives~\eqref{eq_local_efficiency}.
\end{proof}

\begin{theorem}[Efficiency of the residual control]
For all $\theta_h \in Q_h$, we have
\begin{equation}
\label{eq_global_efficiency}
\|\BLR(\theta_h)\|_{\AAA^{-1},\Omega}
\leq c(\kappa)
M
\max_{\ba \in\CV_h}
\left (
\LK_{\oma} + \frac{kh_{\oma}}{p \vel_{\oma}}
\right )
\enorm{(\LP-\LP_h)(\theta_h)}_\Omega.
\end{equation}
In addition, the estimate
\begin{equation}
\label{eq_efficiency_maxest}
\maxest
\leq c(\kappa)
M
\max_{\ba \in\CV_h}
\left (
\LK_{\oma} + \frac{kh_{\oma}}{p \vel_{\oma}}
\right )
\maxerr
\end{equation}
holds true, where
\begin{equation}
\label{eq_definition_maxerr}
\maxerr
\eq
\max_{\substack{\theta_h \in Q_h \\ k\|\theta_h\|_{\mathfrak m} = 1}}
\enorm{(\LP-\LP_h)(\theta_h)}_\Omega.
\end{equation}
\end{theorem}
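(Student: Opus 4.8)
The plan is to derive \eqref{eq_global_efficiency} by decomposing the global residual norm into a sum of local contributions over vertex patches, and then to invoke the two preceding lemmas (Discrete stable minimization and Local efficiency) on each patch. First I would recall that $\BLR_h(\theta_h) = \MA_\dagger \grad \LP_h(\theta_h) - ik\bc_\dagger \LP_h(\theta_h) + \BLF_h(\theta_h)$ with $\BLF_h(\theta_h) = \sum_{\ba \in \CV_h} \BLFa(\theta_h)$, and that on each element $K \in \CT_h$ the partition-of-unity identity $\sum_{\ba} \pa \equiv 1$ (summing only over the $d+1$ vertices of $K$) together with $\sum_\ba \grad \pa \equiv \bo$ allows one to write, on $\oma$,
\begin{equation*}
\MA_\dagger \grad \LP_h(\theta_h) - ik\bc_\dagger \LP_h(\theta_h)
=
\sum_{\ba} \pa \big(\MA_\dagger \grad \LP_h(\theta_h) - ik\bc_\dagger \LP_h(\theta_h)\big)
=
\sum_{\ba} \Ta(\theta_h).
\end{equation*}
Hence $\BLR_h(\theta_h) = \sum_{\ba \in \CV_h}\big(\BLFa(\theta_h) + \Ta(\theta_h)\big)$, where each summand is supported in $\overline{\oma}$.

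Next I would use the finite-overlap property of the vertex patches: each element $K$ belongs to exactly $d+1$ patches $\oma$, so by the triangle inequality and a Cauchy--Schwarz inequality in the (finite) sum,
\begin{equation*}
\|\BLR_h(\theta_h)\|_{\mathfrak A^{-1},\Omega}^2
\leq
(d+1)\sum_{\ba \in \CV_h} \|\BLFa(\theta_h)+\Ta(\theta_h)\|_{\mathfrak A^{-1},\oma}^2.
\end{equation*}
Now I apply \eqref{eq_stable_minimization} followed by \eqref{eq_local_efficiency} patchwise, giving
\begin{equation*}
\|\BLFa(\theta_h)+\Ta(\theta_h)\|_{\mathfrak A^{-1},\oma}
\leq c(\kappa)\, M \left( \LK_{\oma} + \frac{kh_{\oma}}{p\vel_{\oma}}\right) \enorm{(\LP-\LP_h)(\theta_h)}_{\oma}.
\end{equation*}
Pulling the maximum of the prefactor over $\ba \in \CV_h$ out of the sum and using the finite-overlap property once more in the other direction, $\sum_{\ba}\enorm{(\LP-\LP_h)(\theta_h)}_{\oma}^2 \leq (d+1)\enorm{(\LP-\LP_h)(\theta_h)}_\Omega^2$, yields \eqref{eq_global_efficiency} after absorbing the dimensional constants into $c(\kappa)$.

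For the second assertion \eqref{eq_efficiency_maxest}, I would take the maximum over $\theta_h \in Q_h$ with $\|\theta_h\|_{\mathfrak p,\Omega} = 1$ in \eqref{eq_global_efficiency}. By the definition \eqref{eq_definition_maxest} of $\maxest$, the left-hand side becomes $\maxest$; on the right, I need to relate $\max_{\|\theta_h\|_{\mathfrak p}=1}\enorm{(\LP-\LP_h)(\theta_h)}_\Omega$ to $\maxerr$ from \eqref{eq_definition_maxerr}, which is the same maximum but under the normalization $k\|\theta_h\|_{\mathfrak m}=1$. Since both operators $\LP$ and $\LP_h$ are linear in $\theta_h$, a rescaling argument applies: for any $\theta_h \neq 0$, setting $\widetilde\theta_h = \theta_h/(k\|\theta_h\|_{\mathfrak m})$ gives $\enorm{(\LP-\LP_h)(\theta_h)}_\Omega = k\|\theta_h\|_{\mathfrak m}\,\enorm{(\LP-\LP_h)(\widetilde\theta_h)}_\Omega \leq k\|\theta_h\|_{\mathfrak m}\,\maxerr$. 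It therefore suffices to check $k\|\theta_h\|_{\mathfrak m,\Omega} \leq \|\theta_h\|_{\mathfrak p,\Omega}$ — wait, this direction need not hold in general since $\mathfrak p$ and $k^2\mathfrak m$ are unrelated; instead I would carry the factor $k\|\theta_h\|_{\mathfrak m}/\|\theta_h\|_{\mathfrak p}$ explicitly, or more cleanly, normalize \eqref{eq_global_efficiency} by observing that $\BLR_h$ scales linearly and that \eqref{eq_definition_maxest} uses the $\mathfrak p$-normalization while \eqref{eq_definition_maxerr} uses the $\mathfrak m$-normalization, so one relates them through the element-wise wavespeed $\vel_K$ and the constant $c(\widehat\Omega)$ or absorbs the ratio into the stated constant. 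The main obstacle I anticipate is precisely this bookkeeping of the two different normalizations and making sure the ratio between the $\mathfrak p$- and $\mathfrak m$-weighted norms of $\theta_h$ is controlled by a constant of the advertised type (involving $\kappa$, $M$, and the local wavespeed/contrast only), rather than an uncontrolled mesh- or coefficient-dependent factor; everything else is a routine combination of partition of unity, finite overlap, and the two local lemmas.
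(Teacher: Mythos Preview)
Your argument for \eqref{eq_global_efficiency} is correct and in fact more complete than the paper's own proof. The paper's proof skips \eqref{eq_global_efficiency} entirely: it only writes the one line
\[
|k^2(\mathfrak p w,\theta_h)-\beta(w,\LP_h(\theta_h))|
=
|\beta(w,(\LP-\LP_h)(\theta_h))|
\leq
M\,\enorm{w}_\Omega\,\enorm{(\LP-\LP_h)(\theta_h)}_\Omega
\]
and then says ``the conclusion follows from the definition of $\maxest$''. But this bounds the residual \emph{functional}, which is a \emph{lower} bound for $\|\BLR_h(\theta_h)\|_{\mathfrak A^{-1},\Omega}$ by the Prager--Synge estimate, not an upper bound. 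So the paper really needs the partition-of-unity argument you wrote out: decompose $\BLR_h(\theta_h)=\sum_{\ba}(\BLFa(\theta_h)+\Ta(\theta_h))$, invoke \eqref{eq_stable_minimization} and \eqref{eq_local_efficiency} patchwise, and sum with finite overlap. Your treatment is the standard one and is sound.

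For \eqref{eq_efficiency_maxest}, the normalization mismatch you flag is a genuine gap, and the paper's proof has exactly the same gap. The paper silently replaces $\enorm{(\LP-\LP_h)(\theta_h)}_\Omega$ by $\maxerr$, which implicitly assumes $k\|\theta_h\|_{\mathfrak m,\Omega}\leq 1$ whenever $\|\theta_h\|_{\mathfrak p,\Omega}=1$; this is not justified in general. The clean fix is to observe that $\LP$, $\LP_h$, $\BLF_h$ and hence $\BLR_h$ depend on $\theta_h$ only through $\mathfrak p\theta_h$, so in the supremum defining $\maxest$ one may restrict to $\theta_h$ supported on $\{\mathfrak p>0\}$; then
\[
k\|\theta_h\|_{\mathfrak m,\Omega}
\leq
k\max_{K:\,p_K>0}\sqrt{m_K/p_K}\;\|\theta_h\|_{\mathfrak p,\Omega},
\]
which inserts an extra factor $k\max_{K}\sqrt{m_K/p_K}$ on the right of \eqref{eq_efficiency_maxest}. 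For Helmholtz problems with $\mathfrak m=\mathfrak p$ this factor is $k$... actually, rather $1$ after noting the definition of $\maxerr$ already carries the $k$; in general it is not of the advertised form $c(\kappa)$. So your hesitation is justified: without an additional hypothesis such as $\mathfrak p\leq C\,\mathfrak m$ (with $C$ absorbed into the constant), neither your argument nor the paper's closes this step as stated.
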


\begin{proof}
Let $\theta_h \in Q_h$. By the definition of $\LP$ in~\eqref{eq_definition_LP}
and invoking the continuity of $\beta$ in~\eqref{eq_continuity_beta}, we have
\begin{equation*}
|(\mathfrak p w,\theta_h)-\beta(w,\LP_h(\theta_h))|
=
|\beta(w,(\LP-\LP_h)(\theta_h))|
\leq
M \enorm{w}\enorm{(\LP-\LP_h)(\theta_h)}
\leq
M \maxerr \enorm{w},
\end{equation*}
and the conclusion follows from the definition of $\maxest$ in~\eqref{eq_definition_maxest}.
\end{proof}

\subsection{Upper bound}

We introduce
\begin{equation}
\label{eq_definition_maxcontsol}
\maxcontsol
\eq
\max_{\substack{\theta \in L^2(\Omega) \\ k\|\theta\|_{\mathfrak m} = 1}}
\enorm{\LP(\theta)}_\Omega,
\end{equation}
the continuous counterpart to $\maxsol$. From the definition of~$\maxerr$
in~\eqref{eq_definition_maxerr}, it is immediate that
\begin{equation}
\label{eq_bound_maxsol}
\maxsol \leq \maxcontsol + \maxerr.
\end{equation}
For Helmholtz problems, it is known that $\maxcontsol$ grows at least linearly with
the wavenumber, see e.g.~\cite{chaumontfrelet_gallistl_nicaise_tomezyk_2022a,%
galkowski_spence_wunsch_2020a}, so that this constant is expected to be large in
the cases of interest.

\begin{lemma}[Inf-sup upper bound]
Assume that $\beta$ is symmetric in the sense that
\begin{equation}
\label{eq_beta_symmetric}
\beta(u,v) = \beta(\overline{v},\overline{u})
\end{equation}
for all $u,v \in H^1_{\GD}(\Omega)$. Then, we have
\begin{equation}
\label{eq_upper_bound_infsup}
\min_{\substack{u \in H^1_{\GD}(\Omega) \\ \enorm{u}_\Omega = 1}}
\max_{\substack{v \in H^1_{\GD}(\Omega) \\ \enorm{v}_\Omega = 1}}
\Re \beta(u,v)
\leq
\frac{\mathfrak K}{\maxcontsol}
\end{equation}
where
\begin{equation*}
\mathfrak K \eq \max_{K \in \CT_h} \sqrt{\frac{p_K}{m_K}}.
\end{equation*}
\end{lemma}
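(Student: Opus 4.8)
The left side of~\eqref{eq_upper_bound_infsup} is the inf--sup constant of $\beta$ in the norm~\eqref{eq_energy_norm}: for fixed $u$ one may multiply a maximising $v$ by a unit complex number, so $\max_{\enorm{v}_\Omega=1}\Re\beta(u,v)=\max_{\enorm{v}_\Omega=1}|\beta(u,v)|$, and the left side equals $\gamma:=\inf_{u\neq0}\sup_{v\neq0}|\beta(u,v)|/(\enorm{u}_\Omega\enorm{v}_\Omega)$. In particular $\gamma\,\enorm{u_\star}_\Omega\le\sup_{v\neq0}|\beta(u_\star,v)|/\enorm{v}_\Omega$ for every nonzero $u_\star\in H^1_{\GD}(\Omega)$, so it suffices to test this on a good family of trial functions. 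The plan is to use $u_\star=\overline{\LP(\theta)}$ for an arbitrary $\theta\in L^2(\Omega)$, $\LP$ being the continuous solution operator of~\eqref{eq_definition_LP} (well defined since $\beta$ is inf--sup stable). Since the weights $\mathfrak m$ and $\mathfrak A$ are real, the energy norm is invariant under complex conjugation, whence $\enorm{u_\star}_\Omega=\enorm{\LP(\theta)}_\Omega$.

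Conjugating is exactly what activates the defining relation of $\LP$. For any $v\in H^1_{\GD}(\Omega)$, the symmetry hypothesis~\eqref{eq_beta_symmetric}, together with $\overline{\overline{\LP(\theta)}}=\LP(\theta)$, gives $\beta(u_\star,v)=\beta(\overline{\LP(\theta)},v)=\beta(\bar v,\LP(\theta))$; now $\LP(\theta)$ occupies the second slot of $\beta$, so~\eqref{eq_definition_LP} with the admissible test function $\bar v\in H^1_{\GD}(\Omega)$ yields $\beta(u_\star,v)=k^2(\mathfrak p\,\bar v,\theta)_\Omega$. Hence $|\beta(u_\star,v)|=k^2|(\mathfrak p\,\bar v,\theta)_\Omega|$, an expression that no longer involves $\beta$.

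It remains to estimate the right-hand side. Since $\mathfrak p\ge0$, a Cauchy--Schwarz inequality in the $\mathfrak p$-weighted $L^2$ inner product gives $k^2|(\mathfrak p\,\bar v,\theta)_\Omega|\le\big(k\|v\|_{\mathfrak p,\Omega}\big)\big(k\|\theta\|_{\mathfrak p,\Omega}\big)$, and the elementwise comparison $p_K\le\mathfrak K^2 m_K$ combined with $\enorm{v}_\Omega\ge k\|v\|_{\mathfrak m,\Omega}$ bounds $k\|v\|_{\mathfrak p,\Omega}$ by $\mathfrak K\,\enorm{v}_\Omega$. Thus $\gamma\,\enorm{\LP(\theta)}_\Omega\le\mathfrak K\,k\|\theta\|_{\mathfrak p,\Omega}$ for every $\theta\in L^2(\Omega)$; specialising to $\theta$ with $k\|\theta\|_{\mathfrak m,\Omega}=1$ and taking the supremum over such $\theta$, via the definition~\eqref{eq_definition_maxcontsol} of $\maxcontsol$, yields~\eqref{eq_upper_bound_infsup}. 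The argument never needs the maximum defining $\maxcontsol$ to be attained.

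The only genuinely non-routine ingredient is the second step: without~\eqref{eq_beta_symmetric} the operator $\LP$ controls only its second argument, so it is $\overline{\LP(\theta)}$ --- not $\LP(\theta)$ --- that makes $\beta(u_\star,\cdot)$ explicitly computable, and one must be careful that each conjugation and the membership $\bar v\in H^1_{\GD}(\Omega)$ line up correctly. Everything downstream is an elementary weighted Cauchy--Schwarz, the remaining care being to track the weight ratio $\mathfrak K$.
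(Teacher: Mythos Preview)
Your argument follows the same route as the paper's: pick $u_\star=\overline{\LP(\theta)}$, use the symmetry~\eqref{eq_beta_symmetric} to rewrite $\beta(u_\star,v)=\beta(\bar v,\LP(\theta))=k^2(\mathfrak p\,\bar v,\theta)_\Omega$, and then apply a weighted Cauchy--Schwarz. The paper does exactly this (choosing $\theta$ as an actual maximizer in~\eqref{eq_definition_maxcontsol}, whereas you work with a generic $\theta$ and pass to the supremum at the end), so the strategies are essentially identical.

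There is, however, a slip in your last step. After Cauchy--Schwarz you spend the factor $\mathfrak K$ on $v$, obtaining
\[
\gamma\,\enorm{\LP(\theta)}_\Omega \le \mathfrak K\,k\|\theta\|_{\mathfrak p,\Omega}.
\]
You then ``specialise to $\theta$ with $k\|\theta\|_{\mathfrak m,\Omega}=1$ and take the supremum'' to conclude~\eqref{eq_upper_bound_infsup}. But the right-hand side carries $\|\theta\|_{\mathfrak p,\Omega}$, not $\|\theta\|_{\mathfrak m,\Omega}$, and under the constraint $k\|\theta\|_{\mathfrak m,\Omega}=1$ the quantity $k\|\theta\|_{\mathfrak p,\Omega}$ is in general only bounded by $\mathfrak K$, not by $1$. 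As written, your chain of inequalities therefore delivers $\gamma\,\maxcontsol\le\mathfrak K^2$ rather than $\gamma\,\maxcontsol\le\mathfrak K$. (Using $\mathfrak K$ on $\theta$ instead of on $v$ runs into the mirror problem, since $k\|v\|_{\mathfrak p,\Omega}\le\enorm{v}_\Omega$ is not available without $\mathfrak p\le\mathfrak m$.) The paper's own proof makes the same compressed jump at the line $k^2\|w\|_{\mathfrak p,\Omega}\|\theta\|_{\mathfrak p,\Omega}\le\mathfrak K\enorm{w}_\Omega$, so you are in good company; for the Helmholtz setting that motivates the paper one has $\mathfrak p=\mathfrak m$ and $\mathfrak K=1$, and the issue disappears.
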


\begin{proof}
Let $\theta \in L^2(\Omega)$ denote a maximizer in~\eqref{eq_definition_maxcontsol}.
We can then write that
\begin{equation*}
\Re \beta(w,\LP(\theta))
=
\Re k^2 (\mathfrak{p} v,\theta)
\leq
k^2 \|w\|_{\mathfrak p,\Omega}\|\theta\|_{\mathfrak p,\Omega}
\leq
\mathfrak K \enorm{w}_\Omega
\end{equation*}
for all $w \in H^1_{\GD}(\Omega)$. Using~\eqref{eq_beta_symmetric} and defining
$u \eq \overline{\LP(\theta)}/\enorm{\LP(\theta)}_\Omega = \overline{\LP(\theta)}/\maxcontsol$,
we have
\begin{equation*}
\Re \beta(u,v) = \frac{1}{\maxcontsol} \Re b(\overline{v},\LP(\theta))
\leq
\frac{\mathfrak K}{\Theta} \enorm{v}_\Omega,
\end{equation*}
for all $v \in H^1_{\GD}(\Omega)$ and~\eqref{eq_upper_bound_infsup} follows.
\end{proof}

The assumption that $\beta$ is symmetric holds true for Helmholtz problems.
We could also lift this assumption at the price of also analyzing adjoint
problems. We refrain from doing so here for simplicity. We also recall that
for Helmoltz problem, $\mathfrak K = 1$.

\begin{theorem}[Efficiency of the inf-sup bound]
Assume that $\beta$ is symmetric as per~\eqref{eq_beta_symmetric}.
Then, we have
\begin{equation}
\label{eq_upper_bound}
\gamma
=
\min_{\substack{u \in H^1_{\GD}(\Omega) \\ \enorm{u}_\Omega = 1}}
\max_{\substack{v \in H^1_{\GD}(\Omega) \\ \enorm{v}_\Omega = 1}}
\Re \beta(u,v)
\leq
2\mathfrak K
\iota_h
\gamma_h
\end{equation}
whenever
\begin{equation}
\label{eq_definition_iota}
\iota_h \eq
\frac{1}{ 1 - 2\left (\frac{k\hmax}{\vmin}\right )^2 - 2\maxest}
\left (
1 + \frac{1+2\maxerr}{2\maxcontsol}
\right )
>
0.
\end{equation}
\end{theorem}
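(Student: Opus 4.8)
The plan is to derive~\eqref{eq_upper_bound} by a purely algebraic combination of three ingredients already at hand: the inf-sup upper bound $\min\max\Re\beta(u,v)\le\mathfrak K/\maxcontsol$ from the previous lemma, the definition of $\gamma_h$ in~\eqref{eq_definition_gamma_h}, and the estimate $\maxsol\le\maxcontsol+\maxerr$ from~\eqref{eq_bound_maxsol}. The crucial observation is that the ``G\aa rding factor'' $D\eq 1-2(k\hmax/\vmin)^2-2\maxest$ appears both in the numerator of $\gamma_h$ and in the denominator of $\iota_h$, so it cancels in the product $\iota_h\gamma_h$; what remains is a ratio that~\eqref{eq_bound_maxsol} bounds from below by $1/\maxcontsol$.

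Concretely, I would proceed as follows. First, since $\beta$ is symmetric in the sense of~\eqref{eq_beta_symmetric}, the Inf-sup upper bound lemma applies and gives that the left-hand side of~\eqref{eq_upper_bound} is at most $\mathfrak K/\maxcontsol$. Second, substituting~\eqref{eq_definition_gamma_h} and~\eqref{eq_definition_iota} and using $1+\tfrac{1+2\maxerr}{2\maxcontsol}=\tfrac{1+2(\maxcontsol+\maxerr)}{2\maxcontsol}$, a direct computation shows
\[
2\iota_h\gamma_h=\frac{1+2(\maxcontsol+\maxerr)}{\maxcontsol\,(1+2\maxsol)},
\]
the factor $D$ having disappeared. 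Third, \eqref{eq_bound_maxsol} yields $1+2\maxsol\le 1+2(\maxcontsol+\maxerr)$, hence the right-hand side above is at least $1/\maxcontsol$, i.e.\ $2\iota_h\gamma_h\ge 1/\maxcontsol$. Multiplying by $\mathfrak K\ge 0$ and chaining with the first step gives $\min\max\Re\beta(u,v)\le\mathfrak K/\maxcontsol\le 2\mathfrak K\iota_h\gamma_h$, which is exactly~\eqref{eq_upper_bound}.

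I do not expect a genuine obstacle here: the argument is essentially one line of algebra once the quantities are lined up, and the only points deserving a moment's attention are bookkeeping ones --- that the symmetry hypothesis is what licenses the use of the previous lemma, that $\maxcontsol>0$ (guaranteed by $\mathfrak p\not\equiv 0$ together with inf-sup stability) so that the ratios make sense, and that the rearrangement of $\iota_h$ tacitly assumes $D\ne 0$. In the regime of interest $D>0$ --- equivalently, the mesh is fine enough and the flux reconstruction good enough that $\maxest$ and $k\hmax/\vmin$ are small --- both $\gamma_h$ and $\iota_h$ are positive and one moreover has $\iota_h\ge 1$, with $\iota_h\to 1+\tfrac{1}{2\maxcontsol}$ as the discretization is refined; this is what turns~\eqref{eq_upper_bound} into the announced ``factor roughly two'' statement, at least when $\mathfrak K$ is moderate, as for Helmholtz problems where $\mathfrak K=1$.
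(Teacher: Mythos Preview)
Your proposal is correct and follows essentially the same approach as the paper: both use the inf-sup upper bound lemma to get $\gamma\le\mathfrak K/\maxcontsol$, then combine the definitions of $\gamma_h$ and $\iota_h$ (noting the cancellation of the factor $D$) with~\eqref{eq_bound_maxsol} via the same one-line algebraic rearrangement. The only cosmetic difference is the order of the steps --- you compute $2\iota_h\gamma_h$ first and show it dominates $1/\maxcontsol$, whereas the paper starts from $\mathfrak K/\maxcontsol$ and rewrites it as $2\mathfrak K\iota_h\gamma_h$ times something $\le 1$ --- but the content is identical.
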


\begin{proof}
The estimate in~\eqref{eq_upper_bound_infsup} ensures that
\begin{equation*}
\gamma \leq \frac{\mathfrak K}{\maxcontsol}
\leq
\frac{\mathfrak K}{1+2\maxcontsol +2\maxerr}
\frac{1+2\maxcontsol +2\maxerr}{\maxcontsol}
\leq
\frac{2\mathfrak K}{1+2(\maxcontsol+\maxerr)}
\left (
1+ \frac{1+2\maxerr}{2\maxcontsol}
\right )
\end{equation*}
and it follows from~\eqref{eq_bound_maxsol}  that
\begin{equation*}
\gamma
\leq
\left (
1+ \frac{1+2\maxerr}{2\maxcontsol}
\right )
\frac{2\mathfrak K}{1+2\maxsol}.
\end{equation*}
At that point,~\eqref{eq_upper_bound} follows from
the definitions of~$\iota_h$ in~\eqref{eq_definition_iota}
and~$\gamma_h$ in~\eqref{eq_definition_gamma_h}.
\end{proof}

\begin{remark}[Efficiency for Helmholtz problems]
For Helmholtz problems $M$ is generically bounded and
$\maxcontsol \geq c(\widehat \Omega)k\ell/\vel$,
where $\vel \eq \min_{K \in \CT_h} \vel_K$ is the minimal wavespeed.
Hence, under the assumptions that
\begin{equation*}
\frac{k\ell}{\vel} \gg 1,
\qquad
\frac{k\hmax}{\vmin} \ll 1,
\qquad
\maxerr \ll 1,
\end{equation*}
we have
\begin{equation*}
\iota_h
\leq
1
+
c(\widehat \Omega)
\left (\frac{k\ell}{\vel}\right )^{-1}
+
c(\shapereg)
\LK
\left \{
\left (
\frac{k\hmax}{\vmin}
\right )^2
+
\maxerr
\right \},
\end{equation*}
where $\LK \eq \max_{\ba \in \CV_h} \LK_{\oma}$ is the maximal contrast.
Since we also have $\mathfrak K = 1$, the lower bound provided by the proposed
algorithm is expected to be sharp up to factor $2$ for reasonable discretization settings.
Indeed~\eqref{eq_lower_bound} and~\eqref{eq_upper_bound} can then be simplified into
\begin{equation*}
\gamma_h
\leq
\min_{\substack{u \in H^1_{\GD}(\Omega) \\ \enorm{u}_\Omega = 1}}
\max_{\substack{v \in H^1_{\GD}(\Omega) \\ \enorm{v}_\Omega = 1}}
\Re \beta(u,v)
\leq
2 \left (
1
+
c(\widehat \Omega)
\left (\frac{k\ell}{\vel} \right )^{-1}
+
c(\shapereg)
\LK
\left \{
\left (
\frac{k\hmax}{\vmin}
\right )^2
+
\maxerr
\right \}
\right )
\gamma_h.
\end{equation*}
\end{remark}

\section{Numerical examples}
\label{section_numerical_examples}

\subsection{Generic setting}

In the two examples below, we work in the square $\Omega = (-1,1)^2$
with Dirichlet boundary conditions, i.e. $\GD = \partial \Omega$.
In both cases, we take $\bb = \bc = \bo$ and $\MA = \MI$. We further
set $\mathfrak{m} = \mathfrak{p} = 1$ and $\underline{\mathfrak{A}} = \MI$.

For $N \geq 1$, we consider structured meshes $\CT_h$ with mesh size $h = 1/N$
by first subdividing $\Omega$ into $2N \times 2N$ squares of size $h$,
and then subdividing each square into four triangles by joing its vertices
to its barycenter.

We work with $p=1$, $2$ or $3$. Given $\theta_h \in Q_h$,
the associated flux is obtained by solving the global problem
\begin{equation*}
\BLF_h(\theta_h)
\eq
\arg \min_{\substack{\sig_h \in \RT_{p+1}(\CT_h) \cap \BH(\ddiv,\Omega) \\ \div \sig_h = k^2\theta_h+d_\dagger\LP_h(\theta_h)}}
\|\grad \LP_h(\theta_h)+\sig_h\|_{\Omega}.
\end{equation*}
This construction does not directly fit the framework of Section~\ref{section_upper_bound},
but it is observe to be (globally) efficient in practice.

The singular value problems defining $\Theta_h$ and $\rho_h$ are numerically solved
with a power iteration. Specifically, starting with a randomly initialized $\theta_h^{(0)}$,
we set
\begin{equation*}
\theta_h^{(\ell+1)}
=
\frac{1}{\|(\LP_h^\star \circ \LP_h)(\theta_h^{(\ell)})\|_{\mathfrak m}}
(\LP_h^\star \circ \LP_h)(\theta_h^{(\ell)})
\end{equation*}
where $\LP_h^\star$ is the adjoint of $\LP_h$ for the inner product associated
with the $\enorm{\cdot}_\Omega$ norm, for $0 \leq \ell < 128$, and we set
\begin{equation*}
\widetilde{\Theta}_h \eq \frac{1}{k}\enorm{\LP_h(\theta_h^{(128)})}_\Omega,
\end{equation*}
together with a similar definition for $\widetilde{\rho}_h$. In what follows,
we will employ $\widetilde{\Theta}_h$ and $\widetilde{\rho}_h$ in lieu of
$\Theta_h$ and $\rho_h$ in all relevant formulas, but omit the tildas for
readibility.

We will also consider in the two examples 500 frequencies of the form $k = 2\pi\omega$,
with equally spaced values of $\omega$ ranging from $0.01$ to $5$.

\subsection{A dissipative problem}
\label{section_dissipative_problem}

Here, we first consider the case where $d = 1 + i \tau/k$
with $\tau = 1$. In other words
\begin{equation*}
\beta(u,v) = -k^2(u,v)_\Omega-i\tau k(u,v)_\Omega+(\grad u,\grad v)_\Omega
\end{equation*}
for all $u,v \in H^1_0(\Omega)$. The associated PDE problem is well-posed
for all frequencies, and the inf-sup constant is known to behave
in this case as $\gamma \sim \tau/k$. In fact, $\gamma$ and $\Theta$
are analytically available because the eigenpairs of the Dirichlet
Laplace problem are explicitly known.

On Figure~\ref{figure_dissipative_absolute}, we represent the values
of $\gamma_h$ and $\Theta_h$ computed for different frequencies $k$,
mesh sizes $h$, and polynomial degree $p$. As can be seen on the right-panel,
although we expect $\Theta_h$ to increase linearly with $k$, the curves
fall of for coarse meshes and/or small polynomial degrees as the frequency
increases. This is in fact expected, since coarse discretizations cannot
capture the oscillations leading to the increase in $\Theta$, so that
$\Theta_h$ cannot be trusted for large frequencies and coarse discretizations.
On the left panel of Figure~\ref{figure_dissipative_absolute}, we see
that this behaviour is corrected in $\gamma_h$. Indeed, when $\Theta_h$
underestimates $\Theta$, $\gamma_h$ becomes negative. In other words,
the proposed algorithm never provides inaccurate bounds for $\gamma$, but
rather, it does not provide a bound at all when the discretization is too coarse.

Figure~\ref{figure_dissipative_relative} similarly shows the relative values
$\gamma_h/\gamma$ and $\Theta_h/\Theta$. We see that although $\Theta_h$ can
underestimate $\Theta$, $\gamma_h$ never overestimates $\gamma$, as desired.
We further see that for the finest discretization employed ($p=3$ and $N=32$),
the bound $\gamma_h$ on $\gamma$ is very satisfactory, with an underestimation
factor always less than 2, as predicted by our theoretical analysis.

On Figure~\ref{figure_dissipative_absolute}, we have drawn vertical lines
to indicate, for each discretization setting, the maximal frequency $k$
for which a positive value of $\gamma_h$ is obtained. Similarly, the
vertical lines on Figure~\ref{figure_dissipative_relative} show the
maximal frequency $k$ for which $\gamma_h \geq \gamma/2$. We see that
when refining the mesh or increasing the polynomial degree, these lines
move towards the right of the figure. This means that higher frequencies
are satisfactorily handled as the discretization is refined, which is again
in line with theoretical analysis. We in particular observe the improved
behaviour of increasing the polynomial degree.

Figure~\ref{figure_dissipative_freq_size} finally gives a finer representation
of the improved efficiency of high polynomial degrees. As the different slopes
show, the number of degrees of freedom $N_{\rm dofs}(k)$ (directly linked to $N$)
required to obtain a satisfactory bound for the frequency $k$ increases less
quickly for larger polynomial degree. This is again in line with the intuition
that high-order methods are more performant for high-frequency wave problems.

\begin{figure}
\begin{minipage}{.45\linewidth}
\begin{tikzpicture}
\begin{axis}
[
	width=\linewidth,
	xmin=0,
	xmax=5,
	ymax=1,
	ymin=-1.2,
	xlabel={$\omega$},
	ylabel={$\gamma_h$}
]

\plot[dotted] {0};

\plot [color=black,mark=none] coordinates {(0.535, -1.2) (0.535, 1)};
\plot [color=red  ,mark=none] coordinates {(0.775, -1.2) (0.775, 1)};
\plot [color=blue ,mark=none] coordinates {(1.225, -1.2) (1.225, 1)};
\plot [color=black,mark=none] coordinates {(1.665, -1.2) (1.665, 1)};

\plot[color=black,mark=none] table[x=freq,y=gamma] {figures/data/O0/dissipative/run_0004/dataA.txt} node[pos=0.4,pin=-90:{$N=4$} ] {};
\plot[color=red  ,mark=none] table[x=freq,y=gamma] {figures/data/O0/dissipative/run_0008/dataA.txt}                                  ;
\plot[color=blue ,mark=none] table[x=freq,y=gamma] {figures/data/O0/dissipative/run_0016/dataA.txt}                                  ;
\plot[color=black,mark=none] table[x=freq,y=gamma] {figures/data/O0/dissipative/run_0032/dataA.txt} node[pos=0.8,pin= 90:{$N=32$}] {};

\end{axis}
\end{tikzpicture}
\end{minipage}
\begin{minipage}{.45\linewidth}
\begin{tikzpicture}
\begin{axis}
[
	width=\linewidth,
	xmin=0,
	xmax=5,
	ymin=0,
	ymax=32,
	xlabel={$\omega$},
	ylabel={$\Theta_h$}
]

\plot[color=black,mark=none] table[x=freq,y=theta] {figures/data/O0/dissipative/run_0004/dataA.txt} node[pos=0.8,pin={[pin distance=0.5cm]- 90:{$N=4$}} ] {};
\plot[color=red  ,mark=none] table[x=freq,y=theta] {figures/data/O0/dissipative/run_0008/dataA.txt}                                                       ;
\plot[color=blue ,mark=none] table[x=freq,y=theta] {figures/data/O0/dissipative/run_0016/dataA.txt}                                                       ;
\plot[color=black,mark=none] table[x=freq,y=theta] {figures/data/O0/dissipative/run_0032/dataA.txt} node[pos=0.8,pin={[pin distance=2cm] 180:{$N=32$}}] {};

\end{axis}
\end{tikzpicture}
\end{minipage}

$p=1$

\begin{minipage}{.45\linewidth}
\begin{tikzpicture}
\begin{axis}
[
	width=\linewidth,
	xmin=0,
	xmax=5,
	ymax=1,
	ymin=-1.2,
	xlabel={$\omega$},
	ylabel={$\gamma_h$}
]

\plot[dotted] {0};

\plot [color=black,mark=none] coordinates {(1.025, -1.2) (1.025, 1)};
\plot [color=red  ,mark=none] coordinates {(1.745, -1.2) (1.745, 1)};
\plot [color=blue ,mark=none] coordinates {(2.975, -1.2) (2.975, 1)};
\plot [color=black,mark=none] coordinates {(4.745, -1.2) (4.745, 1)};

\plot[color=black,mark=none] table[x=freq,y=gamma] {figures/data/O1/dissipative/run_0004/dataA.txt} node[pos=0.45,pin=-90:{$N=4$} ] {};
\plot[color=red  ,mark=none] table[x=freq,y=gamma] {figures/data/O1/dissipative/run_0008/dataA.txt}                                   ;
\plot[color=blue ,mark=none] table[x=freq,y=gamma] {figures/data/O1/dissipative/run_0016/dataA.txt}                                   ;
\plot[color=black,mark=none] table[x=freq,y=gamma] {figures/data/O1/dissipative/run_0032/dataA.txt} node[pos=0.80,pin= 90:{$N=32$}] {};

\end{axis}
\end{tikzpicture}
\end{minipage}
\begin{minipage}{.45\linewidth}
\begin{tikzpicture}
\begin{axis}
[
	width=\linewidth,
	xmin=0,
	xmax=5,
	ymin=0,
	ymax=32,
	xlabel={$\omega$},
	ylabel={$\Theta_h$}
]

\plot[color=black,mark=none] table[x=freq,y=theta] {figures/data/O1/dissipative/run_0004/dataA.txt} node[pos=0.8,pin={[pin distance=2cm]- 90:{$N=4$}} ] {};
\plot[color=red  ,mark=none] table[x=freq,y=theta] {figures/data/O1/dissipative/run_0008/dataA.txt}                                                       ;
\plot[color=blue ,mark=none] table[x=freq,y=theta] {figures/data/O1/dissipative/run_0016/dataA.txt}                                                       ;
\plot[color=black,mark=none] table[x=freq,y=theta] {figures/data/O1/dissipative/run_0032/dataA.txt} node[pos=0.8,pin={[pin distance=2cm] 180:{$N=32$}}] {};

\end{axis}
\end{tikzpicture}
\end{minipage}

$p=2$

\begin{minipage}{.45\linewidth}
\begin{tikzpicture}
\begin{axis}
[
	width=\linewidth,
	xmin=0,
	xmax=5,
	ymax=1,
	ymin=-1.2,
	xlabel={$\omega$},
	ylabel={$\gamma_h$}
]

\plot[dotted] {0};

\plot [color=black,mark=none] coordinates {(1.325, -1.2) (1.325, 1)};
\plot [color=red  ,mark=none] coordinates {(2.495, -1.2) (2.495, 1)};
\plot [color=blue ,mark=none] coordinates {(4.655, -1.2) (4.655, 1)};

\plot[color=black,mark=none] table[x=freq,y=gamma] {figures/data/O2/dissipative/run_0004/dataA.txt} node[pos=0.4,pin=-90:{$N=4$} ] {};
\plot[color=red  ,mark=none] table[x=freq,y=gamma] {figures/data/O2/dissipative/run_0008/dataA.txt}                                  ;
\plot[color=blue ,mark=none] table[x=freq,y=gamma] {figures/data/O2/dissipative/run_0016/dataA.txt}                                  ;
\plot[color=black,mark=none] table[x=freq,y=gamma] {figures/data/O2/dissipative/run_0032/dataA.txt} node[pos=0.8,pin= 90:{$N=32$}] {};

\end{axis}
\end{tikzpicture}
\end{minipage}
\begin{minipage}{.45\linewidth}
\begin{tikzpicture}
\begin{axis}
[
	width=\linewidth,
	xmin=0,
	xmax=5,
	ymin=0,
	ymax=32,
	xlabel={$\omega$},
	ylabel={$\Theta_h$}
]

\plot[color=black,mark=none] table[x=freq,y=theta] {figures/data/O2/dissipative/run_0004/dataA.txt} node[pos=0.8,pin={[pin distance=2cm]- 90:{$N=4$}} ] {};
\plot[color=red  ,mark=none] table[x=freq,y=theta] {figures/data/O2/dissipative/run_0008/dataA.txt}                                                       ;
\plot[color=blue ,mark=none] table[x=freq,y=theta] {figures/data/O2/dissipative/run_0016/dataA.txt}                                                       ;
\plot[color=black,mark=none] table[x=freq,y=theta] {figures/data/O2/dissipative/run_0032/dataA.txt} node[pos=0.8,pin={[pin distance=2cm] 180:{$N=32$}}] {};

\end{axis}
\end{tikzpicture}
\end{minipage}

$p=3$

\caption{Absolute behaviours of $\gamma_h$ and $\Theta_h$ in the dissipative example.}
\label{figure_dissipative_absolute}
\end{figure}

\begin{figure}
\begin{minipage}{.45\linewidth}
\begin{tikzpicture}
\begin{axis}
[
	width=\linewidth,
	xmin=0,
	xmax=5,
	ymax=1,
	ymin=-1,
	xlabel={$\omega$},
	ylabel={$\gamma_h/\gamma$}
]

\plot[dotted] {0.5};

\plot [color=black,mark=none] coordinates {(0.345, -1.2) (0.345, 1)};
\plot [color=red  ,mark=none] coordinates {(0.515, -1.2) (0.515, 1)};
\plot [color=blue ,mark=none] coordinates {(0.765, -1.2) (0.765, 1)};
\plot [color=black,mark=none] coordinates {(1.205, -1.2) (1.205, 1)};

\plot[color=black,mark=none] table[x=freq,y expr=\thisrow{gamma}/\thisrow{gammaA}] {figures/data/O0/dissipative/run_0004/dataA.txt} node[pos=0.007,pin={[pin distance={2cm}]0:{$N=4$}}] {};
\plot[color=red  ,mark=none] table[x=freq,y expr=\thisrow{gamma}/\thisrow{gammaA}] {figures/data/O0/dissipative/run_0008/dataA.txt}                                                      ;
\plot[color=blue ,mark=none] table[x=freq,y expr=\thisrow{gamma}/\thisrow{gammaA}] {figures/data/O0/dissipative/run_0016/dataA.txt}                                                      ;
\plot[color=black,mark=none] table[x=freq,y expr=\thisrow{gamma}/\thisrow{gammaA}] {figures/data/O0/dissipative/run_0032/dataA.txt} node[pos=0.08,pin=45:{$N=32$}] {}                    ;

\end{axis}
\end{tikzpicture}
\end{minipage}
\begin{minipage}{.45\linewidth}
\begin{tikzpicture}
\begin{axis}
[
	width=\linewidth,
	xmin=0,
	xmax=5,
	ymin=0.1,
	ymax=10.,
	ymode=log,
	xlabel={$\omega$},
	ylabel={$\Theta_h/\Theta$}
]

\plot[color=black,mark=none] table[x=freq,y expr=\thisrow{theta}/\thisrow{thetaA}] {figures/data/O0/dissipative/run_0004/dataA.txt} node[pos=0.8,pin=-90:{$N=4$} ] {};;
\plot[color=red  ,mark=none] table[x=freq,y expr=\thisrow{theta}/\thisrow{thetaA}] {figures/data/O0/dissipative/run_0008/dataA.txt}                                  ;;
\plot[color=blue ,mark=none] table[x=freq,y expr=\thisrow{theta}/\thisrow{thetaA}] {figures/data/O0/dissipative/run_0016/dataA.txt}                                  ;;
\plot[color=black,mark=none] table[x=freq,y expr=\thisrow{theta}/\thisrow{thetaA}] {figures/data/O0/dissipative/run_0032/dataA.txt} node[pos=0.8,pin= 90:{$N=32$}] {};;

\end{axis}
\end{tikzpicture}
\end{minipage}

$p=1$

\begin{minipage}{.45\linewidth}
\begin{tikzpicture}
\begin{axis}
[
	width=\linewidth,
	xmin=0,
	xmax=5,
	ymax=1,
	ymin=-1,
	xlabel={$\omega$},
	ylabel={$\gamma_h/\gamma$}
]

\plot[dotted] {0.5};

\plot [color=black,mark=none] coordinates {(0.755, -1.2) (0.755, 1)};
\plot [color=red  ,mark=none] coordinates {(1.345, -1.2) (1.345, 1)};
\plot [color=blue ,mark=none] coordinates {(2.235, -1.2) (2.235, 1)};
\plot [color=black,mark=none] coordinates {(3.725, -1.2) (3.725, 1)};

\plot[color=black,mark=none] table[x=freq,y expr=\thisrow{gamma}/\thisrow{gammaA}] {figures/data/O1/dissipative/run_0004/dataA.txt} node[pos=0.01,pin={[pin distance=0.5cm]-100:{\begin{turn}{90}$N=4$\end{turn}}}] {};
\plot[color=red  ,mark=none] table[x=freq,y expr=\thisrow{gamma}/\thisrow{gammaA}] {figures/data/O1/dissipative/run_0008/dataA.txt}                                    ;
\plot[color=blue ,mark=none] table[x=freq,y expr=\thisrow{gamma}/\thisrow{gammaA}] {figures/data/O1/dissipative/run_0016/dataA.txt}                                    ;
\plot[color=black,mark=none] table[x=freq,y expr=\thisrow{gamma}/\thisrow{gammaA}] {figures/data/O1/dissipative/run_0032/dataA.txt} node[pos=0.80,pin=  90:{$N=32$}] {};

\end{axis}
\end{tikzpicture}
\end{minipage}
\begin{minipage}{.45\linewidth}
\begin{tikzpicture}
\begin{axis}
[
	width=\linewidth,
	xmin=0,
	xmax=5,
	ymin=0.1,
	ymax=10,
	ymode=log,
	xlabel={$\omega$},
	ylabel={$\Theta_h/\Theta$}
]

\plot[color=black,mark=none] table[x=freq,y expr=\thisrow{theta}/\thisrow{thetaA}] {figures/data/O1/dissipative/run_0004/dataA.txt} node[pos=0.8,pin=-90:{$N=4$} ] {};;
\plot[color=red  ,mark=none] table[x=freq,y expr=\thisrow{theta}/\thisrow{thetaA}] {figures/data/O1/dissipative/run_0008/dataA.txt}                                  ;;
\plot[color=blue ,mark=none] table[x=freq,y expr=\thisrow{theta}/\thisrow{thetaA}] {figures/data/O1/dissipative/run_0016/dataA.txt}                                  ;;
\plot[color=black,mark=none] table[x=freq,y expr=\thisrow{theta}/\thisrow{thetaA}] {figures/data/O1/dissipative/run_0032/dataA.txt} node[pos=0.8,pin= 90:{$N=32$}] {};;

\end{axis}
\end{tikzpicture}
\end{minipage}

$p=2$

\begin{minipage}{.45\linewidth}
\begin{tikzpicture}
\begin{axis}
[
	width=\linewidth,
	xmin=0,
	xmax=5,
	ymax=1,
	ymin=-1,
	xlabel={$\omega$},
	ylabel={$\gamma_h/\gamma$}
]

\plot[dotted] {0.5};

\plot [color=black,mark=none] coordinates {(0.905, -1.2) (0.905, 1)};
\plot [color=red  ,mark=none] coordinates {(1.855, -1.2) (1.855, 1)};
\plot [color=blue ,mark=none] coordinates {(3.535, -1.2) (3.535, 1)};

\plot[color=black,mark=none] table[x=freq,y expr=\thisrow{gamma}/\thisrow{gammaA}] {figures/data/O2/dissipative/run_0004/dataA.txt} node[pos=0.01,pin={[pin distance=2cm]-90:{$N=4$}} ] {};
\plot[color=red  ,mark=none] table[x=freq,y expr=\thisrow{gamma}/\thisrow{gammaA}] {figures/data/O2/dissipative/run_0008/dataA.txt}                                                        ;
\plot[color=blue ,mark=none] table[x=freq,y expr=\thisrow{gamma}/\thisrow{gammaA}] {figures/data/O2/dissipative/run_0016/dataA.txt}                                                        ;
\plot[color=black,mark=none] table[x=freq,y expr=\thisrow{gamma}/\thisrow{gammaA}] {figures/data/O2/dissipative/run_0032/dataA.txt} node[pos=0.850,pin={[pin distance=2cm]-90:{$N=32$}}] {};

\end{axis}
\end{tikzpicture}
\end{minipage}
\begin{minipage}{.45\linewidth}
\begin{tikzpicture}
\begin{axis}
[
	width=\linewidth,
	xmin=0,
	xmax=5,
	ymode=log,
	ymin=0.1,
	ymax=10.,
	xlabel={$\omega$},
	ylabel={$\Theta_h/\Theta$}
]

\plot[color=black,mark=none] table[x=freq,y expr=\thisrow{theta}/\thisrow{thetaA}] {figures/data/O2/dissipative/run_0004/dataA.txt} node[pos=0.8,pin=-90:{$N=4$} ] {};
\plot[color=red  ,mark=none] table[x=freq,y expr=\thisrow{theta}/\thisrow{thetaA}] {figures/data/O2/dissipative/run_0008/dataA.txt}                                  ;
\plot[color=blue ,mark=none] table[x=freq,y expr=\thisrow{theta}/\thisrow{thetaA}] {figures/data/O2/dissipative/run_0016/dataA.txt}                                  ;
\plot[color=black,mark=none] table[x=freq,y expr=\thisrow{theta}/\thisrow{thetaA}] {figures/data/O2/dissipative/run_0032/dataA.txt} node[pos=0.8,pin= 90:{$N=32$}] {};

\end{axis}
\end{tikzpicture}
\end{minipage}

$p=3$

\caption{Relative behaviours of $\gamma_h$ and $\Theta_h$ in the dissipative example.}
\label{figure_dissipative_relative}
\end{figure}

\begin{figure}
\begin{minipage}{.45\linewidth}
\begin{tikzpicture}
\begin{axis}
[
	width=\linewidth,
	xmode=log,
	ymode=log,
	ymin=0.3,
	ymax=5.0,
	xtick={4,8,16,32},
	xticklabels={4,8,16,32},
	ytick={0.5,1,2,4},
	yticklabels={0.5,1,2,4},
	ylabel={$\omega$},
	xlabel={$N$}
]

\plot[black,mark=o     ] table[x=mesh,y=freq] {figures/data/O0/dissipative/scale.txt} node[pos=0.9,pin=-90:{$p=1$}] {};
\plot[red  ,mark=x     ] table[x=mesh,y=freq] {figures/data/O1/dissipative/scale.txt} node[pos=0.6,pin=  0:{$p=2$}] {};
\plot[blue ,mark=square] table[x=mesh,y=freq] {figures/data/O2/dissipative/scale.txt} node[pos=0.9,pin=180:{$p=3$}] {};
\end{axis}
\end{tikzpicture}
\end{minipage}
\begin{minipage}{.45\linewidth}
\begin{tikzpicture}
\begin{axis}
[
	width=\linewidth,
	xmode=log,
	ymode=log,
	ymin=0.3,
	ymax=5.0,
	xtick={4,8,16,32},
	xticklabels={4,8,16,32},
	ytick={0.5,1,2,4},
	yticklabels={0.5,1,2,4},
	ylabel={$\omega$},
	xlabel={$N$}
]

\plot[black,mark=o     ] table[x=mesh,y=freqH] {figures/data/O0/dissipative/scale.txt} node[pos=0.9,pin=-90:{$p=1$}] {};
\plot[red  ,mark=x     ] table[x=mesh,y=freqH] {figures/data/O1/dissipative/scale.txt} node[pos=0.6,pin=  0:{$p=2$}] {};
\plot[blue ,mark=square] table[x=mesh,y=freqH] {figures/data/O2/dissipative/scale.txt} node[pos=0.9,pin=180:{$p=3$}] {};

\end{axis}
\end{tikzpicture}
\end{minipage}
\caption{Maximal frequency in the dissipative example for which $\gamma_h > 0$ on the left panel and $\gamma_h \geq \gamma/2$ on the right panel.
(There are only three data points for $p=3$ because the thresholds are never reached for $N=32$.)}
\label{figure_dissipative_freq_size}
\end{figure}

\subsection{A cavity problem}

We now consider the case where $d=1$, i.e. the setting is the same as above with $\tau = 0$.
In this case, there is no absorption, and the problem at hand is not well-posed for all
frequencies. More precisely, well-posedness fails whenever $k = (\pi/2)\sqrt{n^2+m^2}$
some positive integers $n,m$. There are 131 such resonant frequencies (counted without
multiplicity) in the range $[0,5] \cdot 2\pi$ and $5$ of them exactly belong to the
sampled values, namely $\{1.25,2.5,3.75,4.25,5\} \cdot 2\pi$.

Figure~\ref{figure_cavity_gamma} graphs the computed values of $\gamma_h$ and the ratio
$\gamma/\gamma_h$. These graphs are harder to decipher than the one from
Section~\ref{section_dissipative_problem} due to the large number of resonant frequencies
within the considered range. We can nevertheless conclude that, as proved, the algorithm always
provides a guaranteed bound $\gamma \geq \gamma_h$. We further see that this lower bound is not
overly pessimistic if the discretization is fine.

\begin{figure}
\begin{minipage}{.45\linewidth}
\begin{tikzpicture}
\begin{axis}
[
	width=\linewidth,
	xmin=0,
	xmax=5,
	ymax=1,
	ymin=-1.,
	xlabel={$\omega$},
	ylabel={$\gamma_h$}
]

\plot[dotted] {0};

\plot[color=black,mark=none] table[x=freq,y=gamma] {figures/data/O0/cavity/run_0004/dataA.txt} node[pos=0.3,pin=-90:{$N=4$} ] {};
\plot[color=red  ,mark=none] table[x=freq,y=gamma] {figures/data/O0/cavity/run_0008/dataA.txt}                                  ;
\plot[color=blue ,mark=none] table[x=freq,y=gamma] {figures/data/O0/cavity/run_0016/dataA.txt}                                  ;
\plot[color=black,mark=none] table[x=freq,y=gamma] {figures/data/O0/cavity/run_0032/dataA.txt} node[pos=0.8,pin= 90:{$N=32$}] {};

\end{axis}
\end{tikzpicture}
\end{minipage}
\begin{minipage}{.45\linewidth}
\begin{tikzpicture}
\begin{axis}
[
	width=\linewidth,
	xmin=0,
	xmax=5,
	ymax=1,
	ymin=-1,
	xlabel={$\omega$},
	ylabel={$\gamma_h/\gamma$}
]

\plot[dotted] {0.};

\plot[color=black,mark=none] table[x=freq,y=ratio] {figures/data/O0/cavity/run_0004/ratio.txt} node[pos=0.01,pin= 0:{$N=4$} ] {};
\plot[color=red  ,mark=none] table[x=freq,y=ratio] {figures/data/O0/cavity/run_0008/ratio.txt}                                   ;
\plot[color=blue ,mark=none] table[x=freq,y=ratio] {figures/data/O0/cavity/run_0016/ratio.txt}                                   ;
\plot[color=black,mark=none] table[x=freq,y=ratio] {figures/data/O0/cavity/run_0032/ratio.txt} node[pos=0.60,pin=   0:{$N=32$}] {};

\end{axis}
\end{tikzpicture}
\end{minipage}

$p=1$

\begin{minipage}{.45\linewidth}
\begin{tikzpicture}
\begin{axis}
[
	width=\linewidth,
	xmin=0,
	xmax=5,
	ymax=1,
	ymin=-1.,
	xlabel={$\omega$},
	ylabel={$\gamma_h$}
]

\plot[dotted] {0};

\plot[color=black,mark=none] table[x=freq,y=gamma] {figures/data/O1/cavity/run_0004/dataA.txt} node[pos=0.3,pin=-90:{$N=4$} ] {};
\plot[color=red  ,mark=none] table[x=freq,y=gamma] {figures/data/O1/cavity/run_0008/dataA.txt}                                  ;
\plot[color=blue ,mark=none] table[x=freq,y=gamma] {figures/data/O1/cavity/run_0016/dataA.txt}                                  ;
\plot[color=black,mark=none] table[x=freq,y=gamma] {figures/data/O1/cavity/run_0032/dataA.txt} node[pos=0.8,pin= 90:{$N=32$}] {};

\end{axis}
\end{tikzpicture}
\end{minipage}
\begin{minipage}{.45\linewidth}
\begin{tikzpicture}
\begin{axis}
[
	width=\linewidth,
	xmin=0,
	xmax=5,
	ymax=1,
	ymin=-1,
	xlabel={$\omega$},
	ylabel={$\gamma_h/\gamma$}
]

\plot[dotted] {0.};

\plot[color=black,mark=none] table[x=freq,y=ratio] {figures/data/O1/cavity/run_0004/ratio.txt} node[pos=0.03,pin=-90:{\begin{turn}{90}$N=4$\end{turn}} ] {};
\plot[color=red  ,mark=none] table[x=freq,y=ratio] {figures/data/O1/cavity/run_0008/ratio.txt}                                   ;
\plot[color=blue ,mark=none] table[x=freq,y=ratio] {figures/data/O1/cavity/run_0016/ratio.txt}                                   ;
\plot[color=black,mark=none] table[x=freq,y=ratio] {figures/data/O1/cavity/run_0032/ratio.txt} node[pos=0.7,pin=45:{$N=32$}] {};

\end{axis}
\end{tikzpicture}
\end{minipage}

$p=2$

\begin{minipage}{.45\linewidth}
\begin{tikzpicture}
\begin{axis}
[
	width=\linewidth,
	xmin=0,
	xmax=5,
	ymax=1,
	ymin=-1,
	xlabel={$\omega$},
	ylabel={$\gamma_h$}
]

\plot[dotted] {0};

\plot[color=black,mark=none] table[x=freq,y=gamma] {figures/data/O2/cavity/run_0004/dataA.txt} node[pos=0.3,pin=-90:{$N=4$} ] {};
\plot[color=red  ,mark=none] table[x=freq,y=gamma] {figures/data/O2/cavity/run_0008/dataA.txt}                                  ;
\plot[color=blue ,mark=none] table[x=freq,y=gamma] {figures/data/O2/cavity/run_0016/dataA.txt}                                  ;
\plot[color=black,mark=none] table[x=freq,y=gamma] {figures/data/O2/cavity/run_0032/dataA.txt} node[pos=0.8,pin= 90:{$N=32$}] {};

\end{axis}
\end{tikzpicture}
\end{minipage}
\begin{minipage}{.45\linewidth}
\begin{tikzpicture}
\begin{axis}
[
	width=\linewidth,
	xmin=0,
	xmax=5,
	ymax=1,
	ymin=-1,
	xlabel={$\omega$},
	ylabel={$\gamma_h/\gamma$}
]

\plot[dotted] {0.};

\plot[color=black,mark=none] table[x=freq,y=ratio] {figures/data/O2/cavity/run_0004/ratio.txt} node[pos=0.05,pin=-90:{\begin{turn}{90}$N=4$\end{turn}} ] {};
\plot[color=red  ,mark=none] table[x=freq,y=ratio] {figures/data/O2/cavity/run_0008/ratio.txt}                                   ;
\plot[color=blue ,mark=none] table[x=freq,y=ratio] {figures/data/O2/cavity/run_0016/ratio.txt}                                   ;
\plot[color=black,mark=none] table[x=freq,y=ratio] {figures/data/O2/cavity/run_0032/ratio.txt} node[pos=0.75,pin={[pin distance=1.1cm]90:{$N=32${\color{white}xxx}}}] {};

\end{axis}
\end{tikzpicture}
\end{minipage}

$p=3$

\caption{Behaviour of $\gamma_h$ in the cavity example.}
\label{figure_cavity_gamma}
\end{figure}

On Figure~\ref{figure_table}, we consider the finest discretization ($p=3$ and $N=32$),
and list the frequencies $\omega_h$ for which $\gamma_h \leq 0$. As can be seen there, all these
frequencies have a true resonant frequency $\omega$ such that $|\omega-\omega_h| \leq 5 \cdot 10^{-3}$.
In other words, we only obtain ``false negative'' for frequencies close to resonant values,
which is the desired behaviour.

\begin{figure}
\begin{center}

{
\footnotesize

\begin{tabular}{|ccc||ccc||ccc||ccc|}
\hline
$\omega_h$ & $\omega$ & $|\omega-\omega_h|$ &
$\omega_h$ & $\omega$ & $|\omega-\omega_h|$ &
$\omega_h$ & $\omega$ & $|\omega-\omega_h|$ &
$\omega_h$ & $\omega$ & $|\omega-\omega_h|$
\\
\hline
1.25 & 1.2500 & 0.00e+00 & 3.76 & 3.7583 & 1.68e-03 & 4.28 & 4.2793 & 6.89e-04 & 4.74 & 4.7434 & 3.42e-03
\\
1.82 & 1.8200 & 2.75e-05 & 3.88 & 3.8810 & 1.04e-03 & 4.30 & 4.3012 & 1.16e-03 & 4.76 & 4.7566 & 3.43e-03
\\
2.50 & 2.5000 & 0.00e+00 & 3.89 & 3.8891 & 9.13e-04 & 4.43 & 4.4300 & 1.13e-05 & 4.78 & 4.7762 & 3.76e-03
\\
2.61 & 2.6101 & 7.66e-05 & 4.01 & 4.0078 & 2.20e-03 & 4.45 & 4.4511 & 1.12e-03 & 4.80 & 4.8023 & 2.34e-03
\\
2.85 & 2.8504 & 4.39e-04 & 4.03 & 4.0311 & 1.13e-03 & 4.47 & 4.4721 & 2.14e-03 & 4.81 & 4.8088 & 1.15e-03
\\
3.01 & 3.0104 & 3.99e-04 & 4.04 & 4.0389 & 1.13e-03 & 4.51 & 4.5069 & 3.06e-03 & 4.83 & 4.8283 & 1.70e-03
\\
3.25 & 3.2500 & 5.65e-16 & 4.07 & 4.0697 & 2.95e-04 & 4.53 & 4.5277 & 2.31e-03 & 4.85 & 4.8541 & 4.12e-03
\\
3.26 & 3.2596 & 3.99e-04 & 4.10 & 4.1003 & 3.05e-04 & 4.56 & 4.5621 & 2.07e-03 & 4.91 & 4.9117 & 1.72e-03
\\
3.40 & 3.4004 & 3.68e-04 & 4.14 & 4.1382 & 1.76e-03 & 4.59 & 4.5894 & 6.10e-04 & 4.92 & 4.9244 & 4.43e-03
\\
3.51 & 3.5089 & 1.08e-03 & 4.16 & 4.1608 & 8.29e-04 & 4.61 & 4.6098 & 2.28e-04 & 4.93 & 4.9308 & 7.71e-04
\\
3.58 & 3.5795 & 5.45e-04 & 4.19 & 4.1908 & 7.64e-04 & 4.65 & 4.6503 & 2.69e-04 & 4.95 & 4.9497 & 2.53e-04
\\
3.64 & 3.6401 & 5.49e-05 & 4.25 & 4.2500 & 0.00e+00 & 4.67 & 4.6704 & 3.85e-04 & 4.96 & 4.9624 & 2.36e-03
\\
3.69 & 3.6912 & 1.21e-03 & 4.26 & 4.2573 & 2.65e-03 & 4.70 & 4.6971 & 2.93e-03 & 4.98 & 4.9812 & 1.21e-03
\\
3.75 & 3.7500 & 0.00e+00 & 4.27 & 4.2720 & 2.00e-03 & 4.72 & 4.7170 & 3.01e-03 & 5.00 & 5.0000 & 0.00e+00
\\
\hline
\end{tabular}

}
\end{center}

\caption{Frequencies $\omega_h$ for which $\gamma_h \leq 0$ in the case $p=3$ and $N=32$, closest
true resonant frequency $\omega$, and distance. The largest distance in the table is 4.43e-03.}
\label{figure_table}
\end{figure}

\bibliographystyle{amsplain}
\bibliography{bibliography}

\end{document}